\documentclass[12pt]{amsart}
\usepackage[utf8]{inputenc}
\usepackage[margin=1in]{geometry}
\usepackage[margin=.5in,small]{caption}
\usepackage{graphicx,amssymb,url,adjustbox,setspace}
\usepackage[tracking,spacing,kerning]{microtype}
\usepackage[colorlinks,final]{hyperref}
\microtypecontext{spacing=nonfrench}

\title{Dessins d'enfants for analysts}
\date{\today}

\author{Vincent Beffara}
\address{UMPA, ENS Lyon and HCM, Bonn}
\email{vbeffara@ens-lyon.fr}
\urladdr{http://perso.ens-lyon.fr/vincent.beffara/}
\thanks{This  work  was  started  during  a stay  at  the  Max  Planck
  Institute and  the Hausdorff  Center in Bonn,  and completed  at the
  Newton  Institute in  Cambridge;  support of  these institutions  is
  gratefully acknowledged.}

\newtheorem{theorem}{Theorem}
\newtheorem{lemma}[theorem]{Lemma}
\theoremstyle{definition}
\newtheorem*{definition}{Definition}
\theoremstyle{remark}
\newtheorem{remark}{Remark}

\let\leq=\leqslant \let\geq=\geqslant

\begin{document}
\begin{abstract}
  We present an  algorithmic way of exactly  computing Belyi functions
  for  hypermaps and  triangulations  in  genus $0$  or  $1$, and  the
  associated  dessins,   based  on  a  numerical   iterative  approach
  initialized from  a circle packing combined  with subsequent lattice
  reduction. The main  advantage compared to previous  methods is that
  it is applicable to much larger graphs; we use very little algebraic
  geometry,  and  aim  for  this  paper to  be  as  self-contained  as
  possible.
\end{abstract}

\maketitle

\centerline{\includegraphics[width=.9\linewidth]{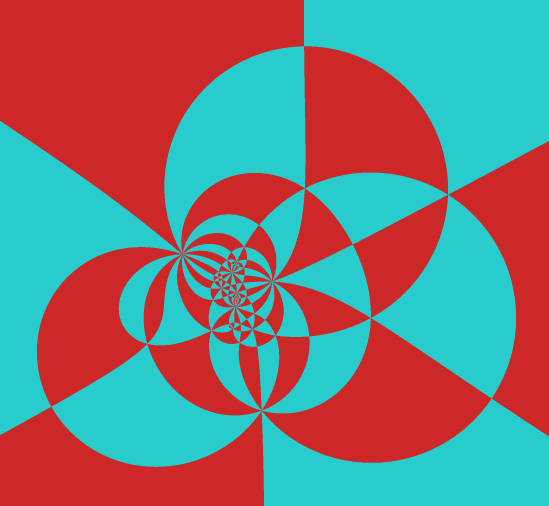}}

\section*{Introduction}

Let $T$ be a triangulation of  the sphere $\mathbb S^2$, \emph{i.e.} a
planar graph embedded in  the sphere in such a way  that all the faces
it delimits  have degree $3$.  From $T$,  one can construct  a complex
structure  on  the sphere  by  gluing  together equilateral  triangles
according to the  combinatorics of $T$; this leads to  a manifold with
conical singularities at the locations of the vertices of $T$, and the
Riemann uniformization  theorem ensures  the existence of  a conformal
bijection  $\Phi_T$  between  this  surface  and  the  Riemann  sphere
$\mathbb C^\ast$. The  image of $T$ by $\Phi_T$ is  a triangulation of
the  Riemann sphere  with  the  same combinatorics  as  $T$ and  edges
embedded as analytic curves, which is well-defined up to the action of
the Möbius group.

While  this   construction  is  quite  simple,   \emph{computing}  the
embedding  explicitly is  very difficult  in general,  as soon  as the
number of vertices of $T$ is not  very small. One of the goals of this
paper  is to  describe  an  algorithm to  do  it  automatically, in  a
``semi-numerical''  way  (that  is,   going  through  an  approximate,
numerical stage to obtain an exact, explicit outcome).

Assume in addition that all the vertices of $T$ have even degree. This
is equivalent to two other properties of the triangulation: first, its
faces can be  partitioned into two sets  in such a way that  a face in
one set is  only adjacent to faces  in the other set;  this is usually
done graphically by coloring one half of the faces white and the other
half black. Second, the vertices of  $T$ can be partitioned into three
classes in  such a way that  along the boundary of  each face, exactly
one vertex of each class appears.

Choosing the triangles used to build our Riemann surface to be the two
hemispheres of the Riemann sphere separated by the extended real line,
with vertices at $0$, $1$ and $\infty$, from the data of $T$ we obtain
a  covering  $\pi_T :  \mathbb  C^\ast  \to  \mathbb C^\ast$  that  is
ramified  only  above $\{0,1,\infty\}$  (such  a  map  is known  as  a
\emph{Belyi function}), and  the embedding of $T$  in $\mathbb C^\ast$
can be seen as the preimage of $\mathbb R \cup \{\infty\}$ by $\pi_T$,
the three  classes of  vertices being  the preimages  of $0$,  $1$ and
$\infty$  respectively. So,  the question  of computing  the embedding
turns into that of computing $\pi_T$.

The covering $\pi_T$ is meromorphic, and  in the case of the sphere it
means that it  has to be a rational function;  computing it then means
computing the location of its zeros and its poles, or equivalently the
coefficients of its  numerator and denominator. It  is always possible
to choose the embedding (\emph{via} a Möbius transformation) in such a
way that all these numbers are  algebraic, and what we are looking for
is their specification  as roots of explicit  polynomials with integer
coefficients.  This has  been  done  in many  cases  before, cf.\  for
example  \cite{BZ:arbre,BM:K3,SS:belyi,Z:belyi}, tracing  back to  the
original  work  of  Klein  on   the  icosahedron  and  its  link  with
fifth-degree  equations~\cite{klein:ikosaeder}; a  recent work  with a
similar    goal    as    ours,    but    very    different    methods,
is~\cite{BBGK:hurwitz}.

\bigskip

The same  construction can  be performed from  a triangulation  of any
orientable surface, and  leads to a Riemann surface $M_T$  of the same
genus       together       with        a       ramified       covering
$\pi_T : M_T \to \mathbb C^\ast$; this  surface in turn can be seen as
a (smooth irreducible projective) algebraic curve, and Belyi's theorem
(see below for a precise statement) ensures that this curve is defined
over $\bar{\mathbb Q}$.  A similar goal as before is  then to identify
this algebraic curve explicitly, by specifying the coefficients of its
equation as roots of explicit integer polynomials, as well as those of
the covering.

In this  paper, besides the  sphere we  will only discuss  the simpler
case of genus $1$, where the Riemann surface built from $T$ can always
be            uniformized            into           a            torus
$\mathbb  T =  \mathbb C  / (\mathbb  Z +  \tau \mathbb  Z)$ for  some
modulus $\tau$ (depending on $T$) in the complex upper-half plane. The
previous problem  becomes the exact  computation of $\tau$ and  of the
covering $\pi_T :  \mathbb T \to \mathbb C^*$. Lifting  $\pi_T$ to the
universal cover of $\mathbb T$, this  means that we are looking for an
elliptic function rather  than for a rational one as  above, but apart
from that the situation remains very  similar in principle, in that we
are  looking  for  a  finite  collection  of  algebraic  numbers.  The
higher-genus situation  should be amenable  to a similar  treatment as
the one  we describe  here, but  the implementation  would have  to be
significantly more complex.

\bigskip

The usual  ways that  the question has  been addressed  previously are
mostly  algebraic in  nature:  one  can write  the  conditions that  a
function has to satisfy in order to be a Belyi function as a system of
polynomial equations, and try to solve it exactly from the start. This
works well for small examples,  but the combinatorial complexity grows
very quickly and  even moderately large cases,  especially in positive
genus,  are beyond  the reach  of computer  algebra systems.  A recent
proposal  by  Bartholdi  et.\ al.\  \cite{BBGK:hurwitz}  for  instance
quotes  a computing  time of  15 minutes  for a  triangulation of  the
sphere with 15 vertices.

Here instead  we base our approach  on a numerical point  of view, and
the central step  is an implementation of the Newton  algorithm to get
an arbitrary precision  approximation of the algebraic  numbers we are
looking for,  initialized with  a configuration obtained  using circle
packings; this can then be combined with a lattice-reduction algorithm
to identify the corresponding integer polynomials. For comparison, for
the same example as used in~\cite{BBGK:hurwitz}, getting 100 digits of
precision for all  parameters (which is more than enough  to obtain an
exact solution) takes less than a second.

Our  initial motivation  for this  work was  related to  probabilistic
conjectures on  the conformal structure  of large random  planar maps;
testing these conjectures numerically  without computing the embedding
exactly is difficult,  because in this setup  approximations come from
many sources: from the randomness of the map, from the fact that it is
of finite size,  and also from the embedding  approximation itself, so
determining  the   embedding  exactly  is  of   practical  importance.
Computing  the  embedding  algebraically  being beyond  reach  of  the
previous methods for the numbers of vertices that are relevant in this
setup, we were led to look for  a more numerical approach which is the
focus of  this paper; we will  apply it to random  triangulations in a
subsequent work.

\bigskip

The   remainder    of   this   paper   is    organized   as   follows:
section~\ref{sec:def} contains  the formal definitions of  the objects
that  we  are  using,  section~\ref{sec:belyi} gives  a  statement  of
Belyi's   theorem  and   is  there   mostly  to   keep  the   argument
self-contained,    and   sections~\ref{sec:num}    and~\ref{sec:torus}
describe our approach  in detail and provide proofs  of convergence in
the  cases of  genus $0$  and $1$,  respectively. Section~\ref{sec:ex}
then gives a few examples of application.

The source code  implementing the algorithms, and used  to produce all
the examples and pictures in this  paper, is publicly available at the
following address:

\bigskip

\centerline{\url{http://github.com/vbeffara/Simulations}}

\section{Definitions}
\label{sec:def}

\subsection{Maps and hypermaps}
\label{sec:maps}

Our starting point will always be a graph drawn on a given topological
surface,  but  seen  as  a combinatorial  object,  \emph{i.e.}  up  to
homeomorphisms  of  the surface.  All  the  following definitions  are
classical, but we still give them for sake of self-containedness. Much
more  can   be  found  for   instance  is   the  book  of   Lando  and
Zvonkin~\cite{LZ:graphs}.

\begin{definition}
  A (finite) \emph{graph} is a pair $G=(V,E)$ where $V$ is seen as the
  set of \emph{vertices} of $G$ and $E  \subset V^2$ as the set of its
  \emph{edges}. We will always  consider \emph{undirected} graphs, for
  which $E$ is  symmetric: $(x,y) \in E \iff (y,x)  \in E$, and assume
  the  absence  of \emph{loops},  \emph{i.e.}  of  edges of  the  form
  $(x,x)$ linking a vertex to itself.
\end{definition}

\begin{definition}
  An \emph{embedding} of an undirected  graph $G=(V,E)$ in an oriented
  surface $M$ is the data of a collection $(x_v)_{v\in V}$ of pairwise
  distinct      points     of      $M$,      and     a      collection
  $(\gamma_{x,y})_{(x,y)\in E}$  of continuous  simple curves  on $M$,
  with  $\gamma_{x,y}  : [0,1]  \to  M$  satisfying $\gamma(0)=x$  and
  $\gamma(1)=y$        and        the        symmetry        condition
  $\gamma_{x,y}(t)   =  \gamma_{y,x}(1-t)$,   and  such   that  curves
  corresponding to distinct edges have disjoint images except possibly
  for  their  endpoints.  The  embedding  is  \emph{proper}  if  every
  connected component of the complement of the union of all the images
  of  the  $\gamma_{x,y}$  (in   other  words,  each  \emph{face})  is
  homeomorphic to a disk.
\end{definition}

\begin{definition}
  A \emph{map} is an equivalence class of proper embeddings of a graph
  $G$ in  a surface $M$, where  two embeddings are identified  if they
  are conjugated  by a homeomorphism of  $M$. When $M$ is  the Riemann
  sphere $\mathbb  C^\ast$, we will  speak about a  \emph{planar map};
  when $M$ is a two-dimensional torus, about a \emph{toroidal map}. If
  each of the faces has exactly  $3$ edges along its boundary, the map
  is called a \emph{triangulation} of $M$.
\end{definition}

\begin{figure}[h]
  \centering
  \includegraphics[width=.49\linewidth]{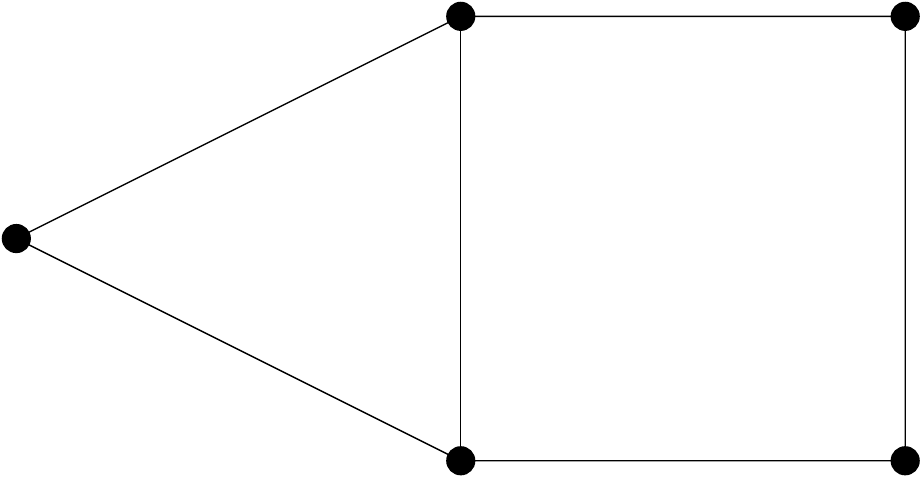}
  \caption{A planar  map with  $5$ vertices, $6$  edges and  $3$ faces
    (including the outer one in this representation).}
  \label{fig:map}
\end{figure}

A map  is in fact a  combinatorial structure, in the  sense that there
are only finitely many maps consisting in embeddings of a given finite
graph  in  a given  surface;  it  can  be  specified by  ordering  the
neighbors of each of the vertices of the graph in a cyclic order. This
remark allows for the definition of  the \emph{dual map} of a map: its
(dual) vertices are  in bijection with the faces of  the map; two dual
vertices will be  declared adjacent if the corresponding  faces of the
initial map share  an edge, and the neighbor ordering  in the dual map
is then given by the cyclic order around the corresponding face of the
primal map.

\bigskip

A very related structure,  which makes computer implementations easier
to manage in practice and is of algebraic relevance, is the following:

\begin{definition}
  Given a  positive integer $n$,  a \emph{hypermap}  of size $n$  is a
  triple  $(\sigma,\alpha,\varphi)$ of  permutations of  a set  of $n$
  elements       satisfying      the       compatibility      relation
  $\sigma \alpha \varphi = \mathrm{id}$.
\end{definition}

This can  be seen as  a generalization of  the notion of  map: indeed,
restricting  to  the  case  where  the  cycles  of  $\alpha$  are  all
transpositions, one  can interpret the  domain of the  permutations as
the set of  all the half-edges of  the map, the cycles  of $\sigma$ as
its  vertices  (reading the  half-edges  incident  to that  vertex  in
counterclockwise order),  those of  $\alpha$ as  its edges  (which are
pairs of half-edges) and those of  $\varphi$ as its faces (reading the
half-edges  emanating from  the  vertices along  that  face in  direct
order) ---  see Figure~\ref{fig:map_h} for  an example, which  is much
clearer than any formal description would be.

\begin{figure}[h]
  \centering
  \includegraphics[width=.49\linewidth]{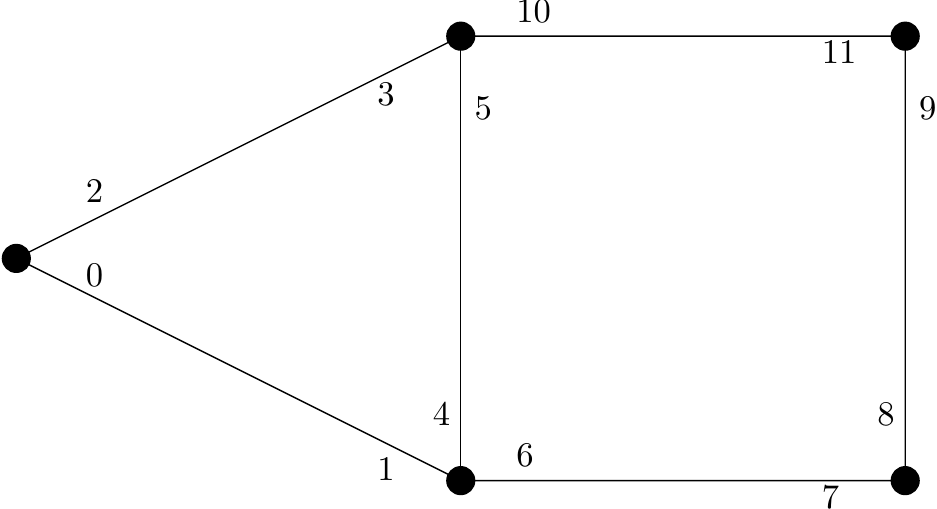}
  \caption{Numbering   of    the   half-edges   of   the    map   from
    Figure~\ref{fig:map}  to obtain  its  hypermap representation:  in
    this                case,                 we                obtain
    $\sigma   =   (0\,2)   (1\,6\,4)   (3\,5\,10)   (7\,8)   (9\,11)$,
    $\alpha  =  (0\,1)  (2\,3)  (4\,5)  (6\,7)  (8\,9)  (10\,11)$  and
    $\varphi  = (0\,4\,3)  (1\,2\,10\,9\,7) (5\,6\,8\,11)$.  Following
    the usual convention, the label of each half-edge is to its left.}
  \label{fig:map_h}
\end{figure}

The  maps   given  as  examples   in  this  paper  (for   instance  in
Appendix~\ref{sec:ex})   are  described   in  this   form,  with   the
$n$-element   set  chosen   as  $\{0,   \ldots,  n-1\}$.   The  triple
corresponding       to      the       dual      map       is      then
$(\varphi^{-1},  \alpha^{-1},  \sigma^{-1})$  --- and  obviously  here
$\alpha^{-1} = \alpha$, but this way of denoting it makes the notation
clearer. Conversely, hypermaps  can also be seen as  a particular case
of maps:

\begin{definition}
  A  graph  is  called  \emph{bipartite}  if its  vertex  set  can  be
  partitioned  into two  disjoint subsets,  in  such a  way that  each
  vertex in one of them is only adjacent to vertices in the other one.
  A map is bipartite if the  associated graph is bipartite; if the map
  is proper, this  is equivalent to saying that its  faces all have an
  even number of edges along their boundaries.
\end{definition}

It  is customary  to  refer to  the  partition of  the  vertices of  a
bipartite map into \emph{black} and  \emph{white} vertices. Such a map
can  then be  encoded  as a  hypermap, where  the  cycles of  $\sigma$
(resp.\ $\alpha$, $\varphi$) correspond  to the black vertices (resp.\
white vertices,  faces). In  this case, the  cycles of  $\varphi$ have
a length equal to a half of the number of edges on the boundary of the
corresponding faces of the map. If  every white vertex has degree $2$,
then the black vertices themselves form a graph with the same hypermap
representation.

One can compose  these two constructions, starting from  a map, seeing
it as  a hypermap where  the cycles of  $\alpha$ have length  $2$, and
then seeing this hypermap as a  bipartite map where the black vertices
correspond to the vertices of the initial map and the white ones (with
degree  $2$) to  its edges.  More graphically,  this is  equivalent to
adding  one vertex  on  each edge  of  the initial  map,  as shown  in
Figure~\ref{fig:map_b}.

\begin{figure}[h]
  \centering
  \includegraphics[width=.49\linewidth]{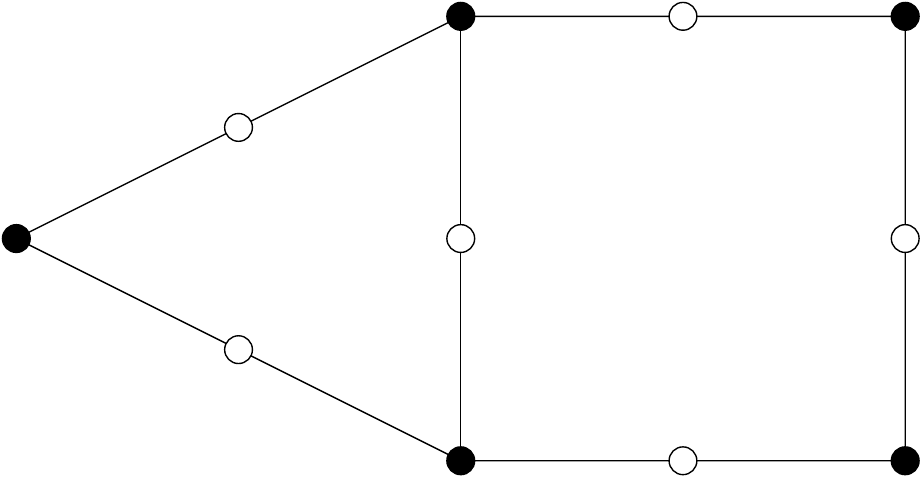}
  \caption{The  bipartite   map  corresponding  to  the   hypermap  in
    Figure~\ref{fig:map_h}.}
  \label{fig:map_b}
\end{figure}

The  last  identification  that  we   will  need  is  with  particular
triangulations:

\begin{definition}
  A  triangulation of  a surface  is called  \emph{tripartite} if  its
  vertex set can  be partitioned into $3$ disjoint subsets,  in such a
  way that each of  its faces has one vertex of  each of these subsets
  along its boundary.  This is equivalent to saying that  its dual map
  is bipartite, and that all its vertices have even degree.
\end{definition}

The three  vertex sets will  be represented  here as black,  white and
red; in~\cite{LZ:graphs},  vertices of the third  kind are represented
as  asterisks.   To  each  tripartite  triangulation   correspond  $3$
bipartite maps,  obtained by keeping  the vertices  in two out  of the
three  subsets  in  the  definition and  the  edges  connecting  them.
Conversely, every proper  bipartite map can be obtained  this way, and
another way of stating this is  that every proper bipartite map can be
completed  into a  tripartite  triangulation of  the  same surface  by
adding a  vertex inside each  face and  connecting it to  the original
vertices on the boundary of that face.

Combining  the previous  remarks,  every (proper)  planar  map can  be
refined into a  tripartite triangulation by adding one  vertex on each
of its edges and one vertex  inside each of its faces, connecting them
in the  natural way.  If $(\sigma, \alpha,  \varphi)$ is  the hypermap
corresponding to  the initial map,  then the  cycles of each  of these
permutations are  in bijection with the  vertices in one of  the three
subsets  in the  partition of  the tripartite  triangulation. We  will
refer to this triangulation as the \emph{tripartite refinement} of the
map   ---    see   Figure~\ref{fig:map_t}.   In   the    diagrams   of
Section~\ref{sec:ex},  the  edges incident  to  red  vertices are  not
represented in order to make the combinatorics more readable.

\begin{figure}[h]
  \centering
  \includegraphics[width=.67\linewidth]{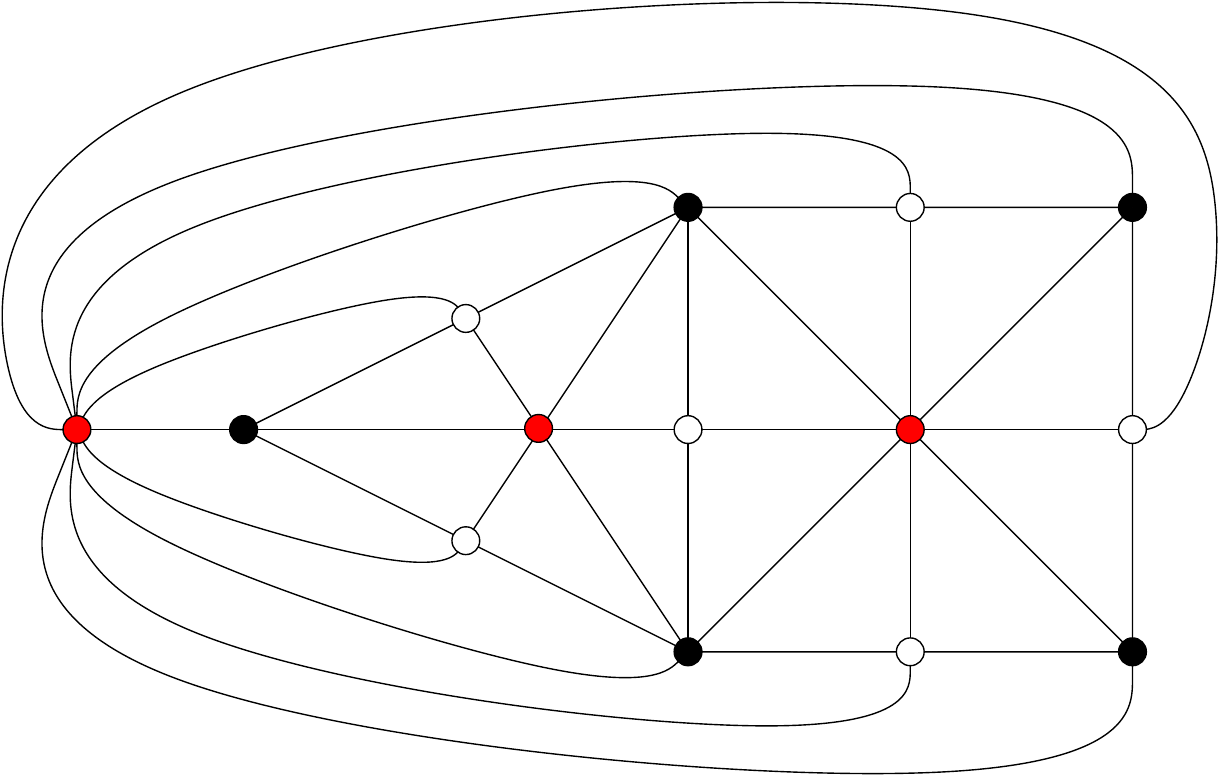}
  \caption{The  tripartite  refinement  of  to  the  planar  map  from
    Figure~\ref{fig:map}. One  can check that  the degree of  each red
    vertex is equal to twice the  number of edges of the corresponding
    face in the initial map.}
  \label{fig:map_t}
\end{figure}

\subsection{Constellations}
\label{sec:const}

Our main objects  of study in this paper are  holomorphic maps defined
between Riemann  surfaces, but it is  convenient as a way  of encoding
Belyi maps to introduce a more  discrete kind of object. In this whole
section, $M$ will denote a fixed Riemann surface. Our first definition
is that of a decorated point of $M$:

\begin{definition}
  A \emph{star} on  $M$ is a pair  $\mathbf z = (z,d)$  where $z\in M$
  (the   \emph{location})  and   $d$  is   a  positive   integer  (the
  \emph{multiplicity}).
\end{definition}

The idea  is that a  star will  specify the behavior  of a map  at its
location (typically $d$ would be the degree of ramification at $z$). A
polynomial can be  given as a finite sequence of  stars located at its
roots, and  a rational fraction as  two such sequences (for  the roots
and the poles). To match later with Belyi's theorem, we will need just
a little more information:

\begin{definition}
  A \emph{constellation}  on $M$  is a triple  $\mathcal C  = (Z,P,O)$
  where   $Z=\{\mathbf   z_i\}_{1   \leq   i   \leq   n_z}$,
  $P=\{\mathbf    p_i\}_{1   \leq    i   \leq    n_p}$   and
  $O=\{\mathbf o_i\}_{1  \leq i \leq n_o}$  are three finite
  (potentially  empty)  sets   of  stars,  which  we   will  refer  to
  respectively as  the \emph{zeros},  \emph{poles} and  \emph{ones} of
  $\mathcal  C$.  A  constellation  is  \emph{non-degenerate}  if  the
  locations of all involved stars are pairwise distinct (which we will
  always implicitly assume except otherwise mentioned).
\end{definition}

If $f: M \to \mathbb C^\ast$  is a holomorphic covering of the Riemann
sphere,     it     comes     naturally    with     a     constellation
$\mathcal C_f = (Z_f,P_f,O_f)$ where $Z_f$ lists the preimages of $0$,
$P_f$ those of $\infty$ and $O_f$ those of $1$, and the multiplicities
are the  corresponding orders  of ramification. Under  the assumptions
that  $f$ only  ramifies  over  $\{0,1,\infty\}$, $f$  is  known as  a
\emph{Belyi function}, and $\mathcal  C_f$ characterizes $f$ uniquely;
this relation can be made much more  explicit in a few cases, which we
describe now.

\subsubsection{Polynomials on $\mathbb C$}
\label{sec:poly}

If $f \in \mathbb C[X]$ is  a polynomial with complex coefficients and
degree $d$,  it is specified uniquely  by the collection of  its roots
and  their  multiplicities,  and   its  leading  coefficient:  if  its
constellation                is                given                as
$$\mathcal  C_f =  (\{(z_i,d_i)\}, \{(\infty,d)\},  \{(o_i,d'_i)\})$$
(with    $\sum    d_i=d$)   then    $f$    can    be   recovered    as
$$f(z) =  \lambda \prod (z-z_i)^{d_i}$$  where $\lambda$ is  chosen to
make the  value of $f$ equal  to $1$ at  the points of $O_f$.  In this
setup, $f$ is a Belyi function if  and only if $\sum d'_i=d$, in which
case $$f(z)-1  = \lambda  \prod (z-o_i)^{d'_i} \quad  \text{and} \quad
f'(z) = \lambda d  \prod (z-z_i)^{d_i-1} \prod (z-o_i)^{d'_i-1}.$$ The
polynomial $f$ is then known as a \emph{Shabat polynomial}.

\subsubsection{Rational fractions on $\mathbb C$}
\label{sec:rat}

Let $f$ now be a (ramified)  covering of the Riemann sphere by itself:
it has to be a rational fraction with complex coefficients, and can be
written as $f(z)=p(z)/q(z)$ where $p$  (resp.\ $q$) is a polynomial of
degree  $d_1$   (resp.\  $d_2$).  Again  $Z_f   =  \{(z_i,d_i)\}$  and
$P_f = \{(p_i,d'_i)\}$ can be written explicitly in terms of the roots
of $p$  and $q$  respectively, and  $\infty$ will  occur in  $Z_f$, in
$P_f$ or  in neither  according to  the relative  values of  $d_1$ and
$d_2$. $f$ can then be similarly reconstructed as
\begin{equation}
  \label{eq:rec0}
  f(z)    =    \lambda     \frac    {\prod    (z-z_i)^{d_i}}    {\prod
    (z-p_i)^{d'_i}}
\end{equation}
(where the products omit the potential  term at $\infty$, and where as
before $\lambda$ is chosen so that  $f$ takes value $1$ at the $o_i$).
Once more,  the constellation  $\mathcal C_f$  corresponds to  a Belyi
function if  and only if the  sums of multiplicities are  the same for
$Z_f$, $P_f$ and $O_f$.

\subsubsection{Functions on a complex torus}
\label{sec:ellip}

The case of genus $1$ can also  be made quite explicit. We will assume
in this whole  section that $M$ is the complex  torus with periods $1$
and     $\tau$     where     $\Im(\tau)>0$,     in     other     words
$$M  = \mathbb  C /  (\mathbb Z  + \tau  \mathbb Z).$$  We will  abuse
notation by writing  the locations of stars as  complex numbers. Here,
$M$ comes with a natural covering $\pi :  \mathbb C \to M$ and if $f :
M \to  \mathbb C^*$  is holomorphic,  it can be  lifted as  a periodic
meromorphic  function  $\hat  f  :  \mathbb C  \to  \mathbb  C^*$  (an
\emph{elliptic function}).

The situation  is a little more  rigid than before, in  the sense that
some relations are  automatically satisfied by the zeros  and poles of
$f$.     More     specifically,     if     $Z_f=\{(z_i,d_i)\}$     and
$P_f=\{(p_i,d'_i)\}$ then
\begin{equation}
  \label{eq:sum0}
  \sum d_i = \sum d'_i \quad \text{and} \quad \sum d_i z_i \equiv \sum
  d'_i p_i
\end{equation}
(the second  relation being meant mod  $\mathbb Z + \tau  \mathbb Z$).
The reconstruction of $f$ from these  can be made in a similar fashion
as  in the  case of  rational fractions:  if the  $p_i$ and  $z_i$ are
chosen in such a way that $\sum d_i z_i = \sum d'_i p_i$ (this time as
complex  numbers  in  $\mathbb  C$),  then  $f$  can  be  obtained  as
\begin{equation}
  \label{eq:rec1}
  f(z)   =    \lambda   \frac   {\prod    \zeta(z-z_i)^{d_i}}   {\prod
    \zeta(z-p_i)^{d'_i}}
\end{equation}
in terms of the Weierstrass  function $\zeta$ with (quasi-)periods $1$
and  $\tau$.  It is  also  the  case that  $f$  can  be written  as  a
polynomial in the Weierstrass function $\wp$ with the same periods and
its derivative $\wp'$,  or equivalently as a rational  function on the
associated elliptic curve,  but although that is the  ``right'' way to
reconstruct $f$  in terms of  algebraic geometry, the  coefficients of
this polynomial  depend on  the constellation  in a  way that  is less
transparent and does not exhibit the same similarity with the previous
two cases.

\subsubsection{A few additional remarks}
\label{sec:constrk}

In the case of Belyi functions, it is always the case that the sums of
multiplicities is the same for $Z_f$,  $P_f$ and $O_f$, being equal to
the  degree of  the covering;  and  the genus  of the  surface $M$  is
related to these multiplicities by  the Riemann-Hurwitz formula. It is
therefore natural to define the following:

\begin{definition}
  A  constellation $\mathcal  C =  (Z,P,O)$ is  \emph{balanced} if  it
  satisfies
  $$\sum_{(w,d)\in Z} d = \sum_{(w,d)\in P} d = \sum_{(w,d)\in O} d =:
  N(\mathcal                                                     C).$$
  If $\mathcal C$ is balanced, its \emph{genus} is given by
  $$g(\mathcal C) := 1 + \frac {N (\mathcal C) - \#(Z \cup P \cup O)}
  2.$$
\end{definition}

With these notations, if $f : M  \to \mathbb C^*$ is a Belyi function,
then its  constellation $\mathcal  C_f$ is balanced  and its  genus it
that of $M$. As we saw previously, if  this genus is $0$ or $1$, it is
then  possible  to  explicitly  reconstruct  $f$  from  $\mathcal  C$.
Besides, if $(w,d) \in O_f$, then $d$ gives the degree of ramification
of $f$ at $w$. This can be used in the other direction to characterize
the constellations corresponding to a Belyi function:

\begin{definition}
  Let $\mathcal C = (Z,P,O)$ be  a balanced constellation of genus $0$
  or  $1$:  we will  say  that  it  is  \emph{exact} if  there  exists
  $\lambda \in \mathbb C$ satisfying the following conditions. Let $f$
  be defined  according to  either \eqref{eq:rec0}  or \eqref{eq:rec1}
  (depending  on the  genus  --- and  assume  that \eqref{eq:sum0}  is
  satisfied in the case of genus $1$). Then for every $(w,d)\in O$:
  \begin{itemize}
  \item $f(w)=1$;
  \item for every $k\in\{1,\ldots,d-1\}$, $f^{(k)}(w)=0$.
  \end{itemize}
  Such a  value of $\lambda$ is  obviously unique, and will  be called
  the \emph{canonical normalization} of $\mathcal C$.
\end{definition}

\subsection{Drawing a constellation}
\label{sec:draw}

Let $M$ be of genus $0$ or $1$  and $\mathcal C = (Z,P,O)$ be an exact
constellation  on  $M$;  let  $f  :  M  \to  \mathbb  C^\ast$  be  the
corresponding covering. This setup allows to draw two natural, related
structures on $M$.

\subsubsection{As a triangulation}
\label{sec:tri}

\begin{figure}[h]
  \centering
  \includegraphics[width=.8\linewidth]{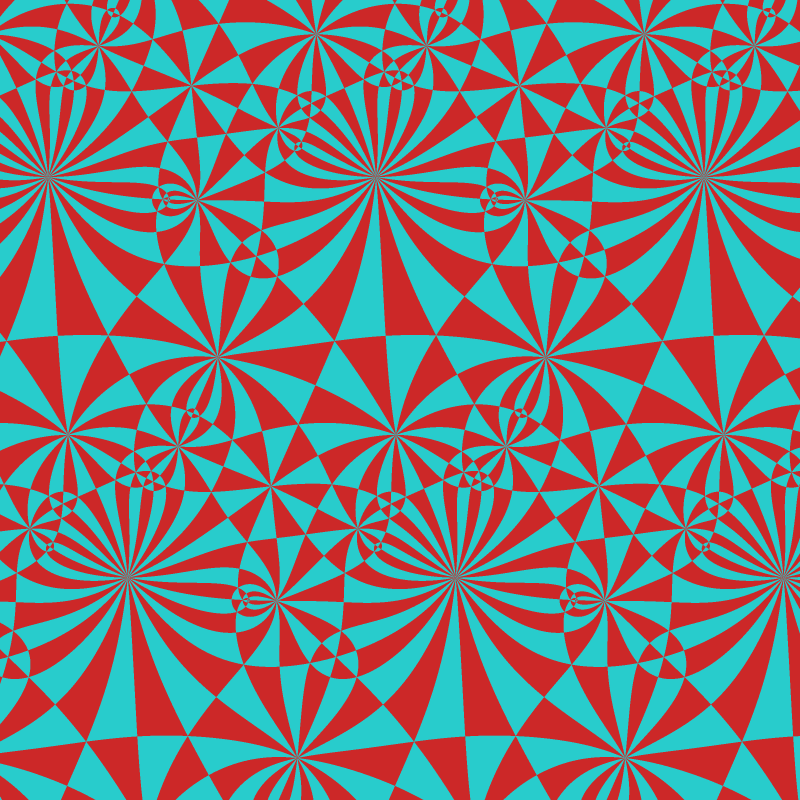}
  \caption{Uniformization   of  a   periodic  triangulation   of  $90$
    vertices. The color corresponds to  the sign of the imaginary part
    of the covering map.}
  \label{fig:random}
\end{figure}

First, one  can look  at the  tripartite map on  $M$ with  vertices at
$Z \cup  P \cup  O$, and  edges given  as the  preimages of  the three
intervals $(-\infty,0)$,  $(0,1)$ and $(1,+\infty)$ of  $\mathbb R$ by
$f$. This is a  triangulation of $M$, and its dual  is a bipartite map
with all vertices of degree $3$.  Most of the pictures in this article
are drawn in this setup (see \emph{e.g.} the first page for an example
in genus $0$, or Figure~\ref{fig:random}  in genus $1$), and the faces
are colored  according to  the sign  of the imaginary  part of  $f$ on
them.

\subsubsection{As a hypermap}
\label{sec:hyp}

Another way to draw $\mathcal C$  is as a hypermap, where $Z$ consists
in the  black vertices, $P$ in  the white vertices, and  the edges are
the preimages of the segment $[0,1]$  by $f$. The combinatorics of the
hypermap  (the triple  of permutations  $(\sigma,\alpha,\phi)$ in  the
language  of  the previous  section)  is  equivalent  to that  of  the
triangulation   above,   and   is   completely   determined   by   the
constellation. It is  this particular way of drawing  this hypermap on
$M$ which  is usually  referred to as  a \emph{dessin  d'enfant}. Even
though  the information  is theoretically  the same  as before,  it is
computationally  more   difficult  to  obtain   visually  satisfactory
pictures this way.

\section{Belyi's theorem}
\label{sec:belyi}

We give in  this section an extremely limited  introduction to Belyi's
theorem, for  the benefit  of readers  who are  not familiar  with the
topic. For much more, we refer to~\cite{Girondo:2012uw} and references
therein.

\subsection{Preliminary remarks}
\label{sec:prel}

For fixed balanced  multiplicities, the space of  constellations is of
complex dimension $\#(Z \cup P \cup O)$. The number of equations to be
satisfied for  a constellation $\mathcal C$  to be exact is  counted a
bit differently depending on the genus:
\begin{itemize}
\item  If $g=0$,  it  is  the number  of  conditions  coming from  the
  definition, which is equal to $N(\mathcal C) - 1$;
\item If  $g=1$, the relation  \eqref{eq:sum0} needs to be  counted as
  well and the total number is $N(\mathcal C)$.
\end{itemize}
The difference $\delta$ between the number of variables and the number
of conditions is then  equal to $3$ in genus $0$, and  to $0$ in genus
$1$; in particular it depends only on the genus.

Before we can make use of this computation, one remark is in order. In
the general  setting, automorphisms  of $M$  act in  a natural  way on
constellations   on  $M$   by  mapping   locations  while   preserving
multiplicities.  This preserves  the property  of being  exact; it  is
natural  to   identify  two  constellations  conjugated   by  such  an
automorphism, or at least to classify them up to automorphism.

In genus  $0$, the group  of automorphisms  of the Riemann  sphere has
complex dimension $3$, which is equal  to $\delta$. This means that we
should   expect  the   number   of  exact   constellations  of   given
multiplicities, up to automorphisms of $M$, to be finite.

In genus $1$, the group of  automorphisms of $M$ has complex dimension
$1$ (in the generic case it consists purely of translations), which is
one more than $\delta$. This means  that in the general case we should
expect   the  existence   of   no  exact   constellation  with   given
multiplicities,  but that  for finitely  many values  of $\tau$  there
should exist finitely many exact constellations.

Of course, all the preceding remarks  are at the heuristic level and
making   them  formal   would  imply   controlling  degeneracies   and
genericity, which would likely be quite difficult to do at this point.
Nevertheless,  as what  follows  will  show, they  do  give the  right
predictions.

\subsection{Analytic statement}
\label{sec:belyi_a}

Let $T$ be a map with triangular faces, of genus $0$ or $1$. It can be
made  into a  Riemann  surface $M_T$  by  gluing together  equilateral
triangles  according to  its combinatorics;  and this  surface can  be
uniformized to either  the Riemann sphere, or to a  complex torus with
periods $1$ and $\tau_T$, where $\tau_T$ is uniquely determined (up to
$SL_2(\mathbb Z)$  action) by $T$. It  is a natural question,  and the
origin of the work presented here, to in the latter case determine the
value  of $\tau_T$  from  the discrete  data of  the  map $T$,  either
exactly or at least numerically (which, if done with enough precision,
suffices to obtain an exact value).

If the  vertices of $T$ all  have even degree (which  is in particular
the case  if $T$  is associated to  a hypermap), they  can be  seen as
stars with multiplicity half of their degree and it is always possible
to split them into three disjoint sets  (or types) $Z$, $P$ and $O$ in
such a way that each of the faces contains one vertex in each of these
sets. One can then use the  uniformization obtained above to map $M_T$
onto  the Riemann  sphere, each  triangle being  mapped to  either the
upper or lower  hemisphere according to the order in  which the vertex
types occur  on its  boundary. This construction  leads to  a covering
$f_T:M_T \to \mathbb C^\ast$ which  is a Belyi function, and $(Z,P,O)$
is   then   the  exact   constellation   associated   to  $f_T$;   see
Figure~\ref{fig:random} for an illustration in the case of genus $1$.

This  gives a  justification  to the  predictions at  the  end of  the
previous  section: since  each  exact constellation  gives  rise to  a
triangulation, it can be obtained  (up to automorphism) from that same
triangulation. In turn, since the  number of triangulations of a given
size is finite,  this implies that the number  of exact constellations
of given multiplicities, counted up to automorphism, is itself finite,
and so is the set of values  of $\tau$ such that $\mathbb C / (\mathbb
Z + \tau \mathbb Z)$ supports an exact constellation of a given size.

This  approach   is  very  satisfactory  at   the  theoretical  level;
unfortunately,  it   doesn't  lend   itself  very  well   to  explicit
computations, and performing this program  numerically in a direct way
would be difficult.

\subsection{Number-theoretic statement}
\label{sec:belyi_n}

There is  a deep connection  between the complex  structure introduced
above and the arithmetic properties of the underlying Riemann surface,
having its origin in the following equivalence:

\begin{theorem}[Belyi]
  Let  $C$ be  a non-singular  algebraic  curve: then  there exists  a
  ramified  covering  $C  \to  \mathbb C^\ast$,  ramified  only  above
  $\{0,1,\infty\}$,   if   and   only   if   $C$   is   defined   over
  $\bar {\mathbb Q}$.
\end{theorem}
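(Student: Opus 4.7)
The plan is to prove the two implications separately. The reverse direction, from the existence of a Belyi cover to $\bar{\mathbb{Q}}$-definability of $C$, is the easier part and can be handled by a Galois-orbit argument. If $f : C \to \mathbb{C}^\ast$ is ramified only over $\{0,1,\infty\}$, then for any $\sigma \in \mathrm{Gal}(\bar{\mathbb{Q}}/\mathbb{Q})$ the conjugate pair $(C^\sigma, f^\sigma)$ is another such cover with identical ramification data, hence associated to the same combinatorial hypermap. Since constellations of given multiplicities form a finite set (section~\ref{sec:belyi_a}), the Galois orbit of $(C,f)$ must be finite, so $(C,f)$ is defined over a number field; a rigidification of the cover (for instance by marking a fibre) takes care of the potential nontriviality of the automorphism group.

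The forward direction, due to Belyi, is proved by an explicit construction. Given $C$ defined over $\bar{\mathbb{Q}}$, one starts with any non-constant morphism $f_0 : C \to \mathbb{P}^1$ defined over $\bar{\mathbb{Q}}$, with finite critical-value set $S_0 \subset \mathbb{P}^1(\bar{\mathbb{Q}})$, and post-composes with rational maps $\mathbb{P}^1 \to \mathbb{P}^1$ defined over $\mathbb{Q}$ until the critical values lie in $\{0,1,\infty\}$. In the arithmetic stage, as long as some $\alpha \in S_n$ has degree $d \geq 2$ over $\mathbb{Q}$, one picks such an $\alpha$ of maximal degree, takes its minimal polynomial $P \in \mathbb{Q}[x]$, and replaces $f_n$ by $P \circ f_n$: the new critical values $P(\beta)$ at the roots $\beta$ of $P'$ lie in a field of degree at most $d-1$, while $\alpha$ is sent to $0$. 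Ordering finite subsets of $\bar{\mathbb{Q}}$ by the multiset of their degrees over $\mathbb{Q}$, this decreases the complexity strictly, so after finitely many iterations one obtains $S \subset \mathbb{Q} \cup \{\infty\}$.

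In the combinatorial stage, one shrinks the finite set $S \subset \mathbb{P}^1(\mathbb{Q})$ down to $\{0,1,\infty\}$ using the classical Belyi polynomial
\[
\beta_{m,n}(x) = \frac{(m+n)^{m+n}}{m^m\, n^n}\, x^m (1-x)^n,
\]
whose critical values are $0$, $1$, $\infty$, and which therefore maps $\{0,1,\infty,m/(m+n)\}$ into $\{0,1,\infty\}$. Composed with $\mathbb{Q}$-rational Möbius transformations that bring a chosen rational point into the form $m/(m+n)$, this lets one remove the elements of $S \setminus \{0,1,\infty\}$ one at a time, and a short induction on $\#S$ closes the argument. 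The main obstacle is the arithmetic stage: one must choose the complexity invariant carefully, because $P$ can a priori produce images $P(\gamma)$ of degree up to $d$, and so the strict decrease has to be measured on the multiset of degrees rather than on the maximal degree alone, the decisive point being that the \emph{new} critical values come from roots of the degree-$(d-1)$ polynomial $P'$. Once this monovariant is set up, the rest of the argument is essentially formal bookkeeping.
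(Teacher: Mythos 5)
The paper itself does not prove this statement: it is quoted as a classical theorem, with the proof deferred to the references (section~\ref{sec:belyi} is expository), so your proposal has to be measured against the standard proofs rather than against anything in the text. Your forward direction is exactly Belyi's original argument and is essentially correct: the new critical values created by composing with the minimal polynomial $P$ come from the roots of $P'$, hence have degree at most $d-1$, the images $P(\gamma)$ of the old critical values satisfy $\mathbb Q(P(\gamma))\subseteq\mathbb Q(\gamma)$ so their degrees do not increase, and either your multiset order or the lexicographic pair (maximal degree, number of elements attaining it) gives a valid terminating monovariant. The combinatorial stage with $\beta_{m,n}$ is also the standard one; the only detail worth spelling out is that before writing a rational critical value as $m/(m+n)$ you must move it into the interval $(0,1)$ by an element of the anharmonic group permuting $\{0,1,\infty\}$, which is always possible because the six cross-ratio images of a rational $\lambda\notin\{0,1\}$ always meet $(0,1)$.

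The genuine gap is in the reverse direction, which you dismiss as the easy half. First, $\mathrm{Gal}(\bar{\mathbb Q}/\mathbb Q)$ does not act on $(C,f)$: at this stage $C$ is only known to be a complex curve, so you must conjugate by $\sigma\in\mathrm{Aut}(\mathbb C/\mathbb Q)$ acting on the (transcendental, a priori) coefficients of a model of $C$ and of $f$. Second, and more seriously, the step ``the isomorphism class of $(C,f)$ has finite $\mathrm{Aut}(\mathbb C/\mathbb Q)$-orbit, hence $(C,f)$ is defined over a number field'' is not formal bookkeeping --- it is precisely the content of this implication. It requires either Weil's Galois descent criterion (and then the field-of-moduli versus field-of-definition problem is exactly where the difficulty sits: marking a fibre rigidifies the cover, but you still have to verify Weil's cocycle conditions for the rigidified object and then descend back to $C$ itself), or a specialization argument spreading $(C,f)$ out over a finitely generated $\mathbb Q$-algebra. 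Finally, the finiteness you invoke should be the finiteness of isomorphism classes of degree-$N$ covers of $\mathbb C^\ast$ unramified outside $\{0,1,\infty\}$, which comes from the Riemann existence theorem (covers correspond to pairs of permutations of $N$ letters up to conjugacy); the paper's finiteness statement for exact constellations in section~\ref{sec:belyi_a} is confined to genus $0$ and $1$ and is itself deduced from the analytic uniformization, so it cannot be cited as the general input here. As it stands, the reverse direction asserts rather than proves the nontrivial descent step.
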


We will mostly be concerned with the ``only if'' part in what follows;
notice that  when we build  a Riemann surface from  gluing equilateral
triangles, it  automatically comes with a  constellation $(Z,P,O)$ and
such a  covering, namely the  function $f$ constructed  above, sending
the points in $Z$ (resp.\ $P$,  $O$) to $0$ (resp.\ $\infty$, $1$) and
unramified above the other points.

What     it     means     for     us     is     that     the     torus
$\mathbb C / (\mathbb Z +  \tau \mathbb Z)$ built from a triangulation
of  genus   $1$,  seen   as  an  elliptic   curve,  is   defined  over
$\bar   {\mathbb  Q}$.   In   particular,  the   modulus  $\tau$   can
theoretically be  computed explicitly. In  practice, this seems  to be
possible only in very few cases, and in each of them, requires the use
of particular symmetries of the triangulation.

\section{Numerical computation of Belyi maps: genus $0$}
\label{sec:num}

We now turn  to the main point of this  paper, namely a semi-numerical
strategy  to exactly  compute the  branched covering  associated to  a
given hypermap.  As argued above,  this can be reduced  to determining
the exact constellation  associated to the map, which  lends itself to
numerical approaches. We start with the case of hypermaps drawn on the
sphere, which benefits  from lighter notation while  retaining most of
the features of our approach.

Fix       three        tuples       of        positive       integers,
$d^{0}      =      (d_i^{0})_{1      \leq      i      \leq      n_0}$,
$d^{\infty}   =  (d_i^{\infty})_{1   \leq   i   \leq  n_\infty}$   and
$d^{1}  = (d_i^{1})_{1  \leq  i  \leq n_1}$  with  the  same sum  $N$,
satisfying the genus-$0$ condition
$$n_0+n_\infty+n_1  =   N+2,$$  and  let  $\mathfrak   C  =  \mathfrak
C_{d^0,d^\infty,d^1}$
be the space of all  non-degenerate constellations on $\mathbb C^\ast$
with  signature $(d^0,d^\infty,d^1)$  (which are  all balanced  and of
genus $0$). $\mathfrak C$ is a complex manifold of dimension $N+2$. If
$\mathcal C = (Z,P,O) \in \mathfrak C$, let
$$f_{\lambda,  \mathcal C}  : z  \mapsto \lambda  \prod_{(w,d) \in  Z}
(z-w)^d         \prod_{(w,d)        \in         P}        (z-w)^{-d}$$
be defined as above (omitting the star at $\infty$ in the products, if
any),            and            consider            the            map
$\Phi : \mathbb C \times \mathfrak C \to \mathbb C^N$ defined by
$$\Phi(\lambda,   \mathcal   C)    :=   (f(o_1)-1,   f'(o_1),   \ldots,
f^{(d_1^1-1)}(o_1),   f(o_2)-1,  f'(o_2),   \ldots,  f^{(d^1_{n_1}-1)}
(o_{n_1})),$$
where  to lighten  notation we  let $f  = f_{\lambda,\mathcal  C}$ and
$\mathcal C  = (Z,P,O)$  with $O  = ((o_i,d^1_i))$.  The constellation
$\mathcal C$ is exact if and only if there exists $\lambda \in \mathbb
C$ such that $\Phi(\lambda,\mathcal C) = (0,\ldots,0)$.

To normalize the map embeddings, let  $\mathfrak C^\ast$ be the set of
all constellations  $\mathcal C =  (Z,P,O) \in \mathfrak C$  such that
$z_1=0$, $p_1=\infty$ and $o_1=1$. $\mathfrak C^\ast$ is a manifold of
complex  dimension $N-1$  which  can  be seen  as  an  open subset  of
$\mathbb C^{N-1}$ by listing the locations of all the stars except for
$z_1$, $p_1$ and $o_1$. The  restriction $\Phi^\ast : \mathbb C \times
\mathfrak C^\ast \to  \mathbb C^N$ can thus  be seen as a  map from an
open  subset of  $\mathbb  C^N$  to $\mathbb  C^N$,  which is  clearly
analytic in all its variables.

From the previous  discussion, the set of  preimages of $(0,\ldots,0)$
by $\Phi^\ast$ is  finite, and each of its elements  corresponds to an
exact constellation and to the  associated hypermap and covering. From
this, the general structure of the algorithm is rather clear: starting
from a  tripartite hypermap  $M$ and  the associated  triangulation of
genus $0$,  to find its  constellation $\mathcal C_M$ we  will perform
the following.
\begin{itemize}
\item Step 1: find an approximation $\mathcal C_0$ of $\mathcal C_M$;
\item Step 2: refine the  approximation via an iterative scheme, using
  $\mathcal C_0$ as a starting point;
\item  Step 3:  verify that  the solution  obtained is  the right  one
  (topologically); otherwise go back to step~1 with better precision;
\item Step 4: identify the coefficients as algebraic numbers;
\item  Step 5:  verify that  the solution  obtained is  the right  one
  (algebraically).
\end{itemize}

We now describe  each of these steps in some  detail, focusing more on
the first two which constitute the main contribution of the section.

\subsection{Step 1: Approaching the complex structure}
\label{sec:cp}

To  initialize the  algorithm,  we  need to  get  an approximation  of
$\mathcal  C_M$.  This  can  be  done  by  approximating  the  complex
structure derived  from $M$, and  there are several  options available
here.  From  the  complex  analytical   point  of  view,  one  natural
possibility would be to start from  an arbitrary embedding of $M$ into
the sphere with  straight edges, and to identify  the uniformizing map
as  a solution  to a  Beltrami equation  with a  Beltrami differential
taken to be  constant on each of the faces.  This has been implemented
in a few cases but is  quite involved~\cite{cannon:RMT}, so we chose a
simpler way, following~\cite{BK:cp}.

Given a triangulation  $T$ of the sphere, there  exists a \emph{circle
  packing} with the combinatorics of  $T$, namely a collection $(C_v)$
of circles indexed by the vertex  set of $T$, such that the associated
disks  have disjoint  interiors  and  such that  $C_v$  and $C_w$  are
tangent if and only  if $v$ and $w$ are adjacent  vertices of $T$. The
circle packing is unique up to Möbius transformations, and it provides
an embedding of $T$ in which the vertices are mapped to the centers of
the circles and the  edges are each the union of  two circle radii; in
particular,  if  $T$  is  tripartite   it  provides  also  a  balanced
constellation $\mathcal  C_{cp}(T)$. For  general reference  on circle
packings, we refer the reader to~\cite{stephenson:circle}

Our   first   approximation  of   $\mathcal   C_M$   will  simply   be
$\mathcal  C_{cp}(M)$. In  practice, it  turns  out to  often be  good
enough for  our purposes,  but it  is not  always the  case. To  get a
better one, we  follow the strategy of \cite{BK:cp} and  refine $M$ in
the following way. Given a triangulation,  one can replace each of its
faces by $4$  new triangles by inserting a vertex  at the mid-point of
each  of  its  edges  ---  see\  Figure~\ref{fig:sub}.  Iterating  the
procedure   starting  from   $M_0   :=  M$,   one   gets  a   sequence
$(M_n)_{n \geq  0}$ of  triangulation, which  are all  tripartite. The
main result is then the following:

\begin{figure}[h]
  \centering
  \includegraphics{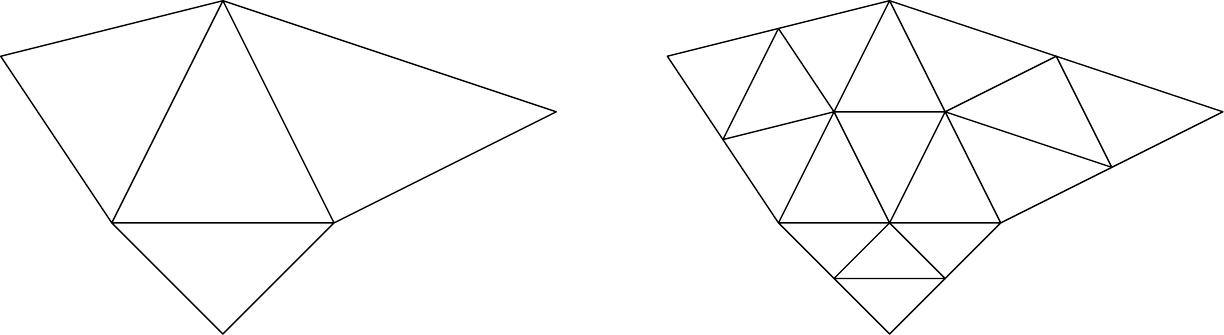}
  \par\bigskip\bigskip
  \includegraphics{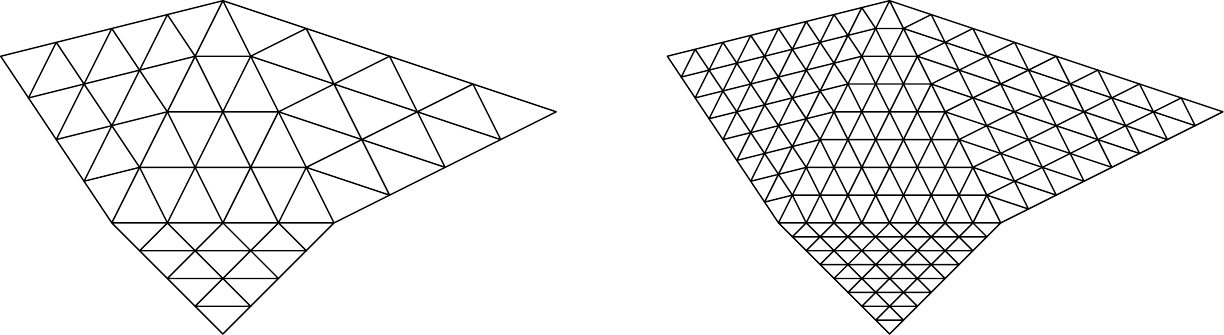}
  \caption{Successive refinements of a triangulation.}
  \label{fig:sub}
\end{figure}

\begin{theorem}[Theorem~4.8 in \cite{BK:cp}]
  \label{thm:sub}
  Under appropriate normalization, $\mathcal C_{cp}(M_n)$ converges to
  $\mathcal C_M$ as $n\to\infty$.
\end{theorem}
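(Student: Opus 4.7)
The strategy is to apply a Rodin--Sullivan-type convergence theorem for circle packings on the sphere, together with the observation that equilateral refinement is conformally trivial. The key point is that subdividing an equilateral triangle into four equilateral sub-triangles is a scaling on each sub-triangle; hence the Riemann surface $M_{T_n}$ built from $M_n$ by gluing equilateral triangles is canonically biholomorphic to the original $M_T$. Every vertex of $M_n$ therefore has a definite location on the Riemann sphere, so $\mathcal C_M$ extends naturally to an exact constellation $\mathcal C_{M_n}$ for each $n$, and the claim becomes: the circle packing positions $\mathcal C_{cp}(M_n)$, suitably normalized, converge to $\mathcal C_{M_n}$, which restricts back to $\mathcal C_M$ on the original $M$-vertices.

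The argument would then proceed in four steps. First, fix three distinguished vertices of $M$, one in each of the classes $Z$, $P$, $O$, and normalize both $\mathcal C_{cp}(M_n)$ and $\mathcal C_{M_n}$ so that these three points land at $0$, $1$, $\infty$; since the three vertices survive every refinement, the normalization is uniform in $n$. Second, one verifies uniformly bounded valence: a direct combinatorial check shows that each midpoint-vertex introduced by refinement has degree exactly $6$ in the refined triangulation, while the vertices of $M$ retain their degrees, so $\sup_n \max_v \deg_{M_n}(v) < \infty$. Third, the diameter of any triangle of $M_n$, measured under the true embedding, shrinks to zero as $n \to \infty$ (this uses the conformal isomorphism above and standard distortion estimates). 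Fourth, invoke the Rodin--Sullivan convergence theorem in the form generalized to the sphere by He--Schramm: with uniformly bounded valence, shrinking mesh, and a fixed Möbius normalization, the circle packing embedding converges uniformly to the true conformal embedding, which yields the desired convergence of constellations.

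The main obstacle is the last step near the conical singularities of $M_T$. At a vertex of $M$ whose degree differs from $6$, $M_T$ carries a cone angle different from $2\pi$, and the uniformizing map is branched there. Away from these finitely many points, equicontinuity from the Ring Lemma and a standard extract-and-identify argument (any subsequential limit is conformal and fixes $0$, $1$, $\infty$, hence is the identity) give uniform convergence. Near a singular vertex, a separate local analysis is required, using the combinatorial rotational symmetry of the circle packing around such a vertex together with the rigidity of flowers of tangent circles to force the correct branched behavior in the limit. This local control at the branch points is the technically nontrivial part and dominates the content of \cite{BK:cp}; it would be the hardest piece to write down carefully.
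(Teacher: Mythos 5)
The first thing to note is that the paper does not prove this statement at all: Theorem~\ref{thm:sub} is imported verbatim as Theorem~4.8 of the cited reference \cite{BK:cp}, and the text around it only explains how the result is \emph{used} (as a guarantee that the circle-packing initializations eventually land in the basin of attraction of Newton's method). So there is no in-paper proof to compare yours against; what you have written is a sketch of the external result. As such a sketch, your outline does follow the strategy of the circle-packing literature: the crucial structural observations --- that midpoint subdivision of the equilateral structure does not change the underlying Riemann surface, that refinement keeps the valence uniformly bounded (new vertices have degree $6$, old ones keep their degree), and that one then appeals to a Rodin--Sullivan / He--Schramm type convergence statement under a fixed three-point normalization --- are exactly the right ingredients.

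Two criticisms, though. First, your claim that the uniformizing map is \emph{branched} at the cone vertices is wrong: in genus $0$ the uniformization $\Phi_T : M_T \to \mathbb C^\ast$ is a conformal homeomorphism; the cone singularities of the flat equilateral metric are conformally removable points, and it is the Belyi function $\pi_T$, not $\Phi_T$, that ramifies at the vertices. Second, and consequently, the ``main obstacle'' you isolate (a delicate local analysis of flowers around vertices of degree $\neq 6$) is not where the difficulty lies in the standard argument, and your proposal defers precisely the step it declares hardest, so it is not a proof. The usual route is: the Ring Lemma for bounded-valence packings gives uniform quasiconformality of the packing maps everywhere, including near the exceptional vertices; the hexagonal packing lemma (or its refinement-scheme analogue) gives asymptotic conformality away from the finitely many original vertices, i.e.\ dilatation constants $K_n \to 1$ there; any normalized subsequential limit is then quasiconformal on the whole sphere and $1$-quasiconformal off a finite set, hence conformal everywhere by removability of finite sets, and the three-point normalization forces it to be $\Phi_T$. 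Uniform convergence of the maps then gives convergence of all vertex positions, which is the statement about constellations. So the genuinely nontrivial analytic input is the $K_n \to 1$ estimate under refinement --- which you invoke only implicitly --- while the cone points, once the branching confusion is cleared up, are handled by a soft removability argument rather than the local rigidity analysis you propose.
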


This  is  exactly   what  we  needed:  we  now  have   a  sequence  of
constellations  which  approaches $\mathcal  C_M$  and  will serve  as
starting points  for the iterative method  in the next step.  We first
run        steps       2        and       3        starting       from
$\mathcal C_{cp}(M_0) =  \mathcal C_{cp}(M)$, then if  that fails from
$\mathcal  C_{cp}(M_1)$,  $\mathcal  C_{cp}(M_2)$,~\ldots\  until  the
first  one that  succeeds  ---  which is  guaranteed  to happen  after
finitely      many     subdivisions      by     Theorems~\ref{thm:sub}
and~\ref{thm:newton}.

\subsection{Step 2: Newton's method}
\label{sec:newton}

This part is quite standard, but as far as we can tell it has not been
implemented in  this setup before:  one can apply  a multi-dimensional
Newton's method to build a  sequence of constellations which converges
to $\mathcal C_M$, provided one starts close enough to it.

More  specifically, still  identifying  $\mathfrak C^*$  with an  open
subset           of          $\mathbb           C^{N-1}$,          let
$\Psi  :  \mathbb C  \times  \mathfrak  C^\ast  \to \mathbb  C  \times
\mathfrak C^\ast$ be defined by
$$\Psi((\lambda,\mathcal    C))     :=    (\lambda,\mathcal     C)    -
J_\Phi((\lambda,\mathcal    C))^{-1}   \Phi((\lambda,\mathcal    C))$$
whenever     the    Jacobian     $J_\Phi$     is    nonsingular     at
$(\lambda,\mathcal  C)$. The  main statement  of this  section is  the
convergence of Newton's method:

\begin{theorem}
  \label{thm:newton}
  Let $\lambda_M$  be the  canonical normalization of  $\mathcal C_M$:
  there  exists a  neighborhood $U$  of $(\lambda_M,\mathcal  C_M)$ in
  $\mathbb C \times \mathfrak  C^\ast$ satisfying $\Psi(U) \subset U$,
  on which $J_\Phi$ is everywhere nonsingular, and such that, whenever
  $(\lambda,\mathcal C)  \in U$,  the sequence defined  inductively by
  $x_0 = (\lambda, \mathcal C)$ and $x_{n+1} = \Psi(x_n)$ converges to
  $(\lambda_M, \mathcal C_M)$ as $n \to \infty$.
\end{theorem}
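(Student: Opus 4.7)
The plan is to deduce the theorem from the standard quadratic-convergence statement for Newton's method applied to the analytic map $\Phi^*$. The only thing to check is that the Jacobian $J_\Phi$ is invertible at the zero $x^* := (\lambda_M, \mathcal C_M)$; once that is known, $J_\Phi$ stays invertible on a neighborhood of $x^*$ by continuity, a first-order expansion of $\Phi$ around $x^*$ combined with continuity of $x \mapsto J_\Phi(x)^{-1}$ gives $\Psi(x) - x^* = O(\|x - x^*\|^2)$ near $x^*$, and taking $U$ to be a small enough polydisk centered at $x^*$ yields simultaneously the inclusion $\Psi(U) \subset U$ and the (quadratic) convergence of the iterates to $x^*$. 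Everything therefore reduces to invertibility of $J_\Phi$ at $x^*$.

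To prove that, I would analyze the kernel explicitly. A tangent vector $v = (\delta\lambda, \{\delta z_i\}_{i \geq 2}, \{\delta p_j\}_{j \geq 2}, \{\delta o_i\}_{i \geq 2})$ produces a first-order variation $\dot f$ of $f_{\lambda, \mathcal C}$ that depends only on $\delta\lambda$ and on the $\delta z_i, \delta p_j$ (the $o_i$ do not appear in the formula for $f_{\lambda, \mathcal C}$), and the kernel equations read $\dot f^{(k)}(o_i) + f_M^{(k+1)}(o_i)\,\delta o_i = 0$ for $k \in \{0, \ldots, d_i^1 - 1\}$, with the convention $\delta o_1 = 0$. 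Since $f_M - 1$ vanishes to order exactly $d_i^1$ at $o_i$, the equations with $k < d_i^1 - 1$ collapse to $\dot f^{(k)}(o_i) = 0$, while the $k = d_i^1 - 1$ equation either gives an additional vanishing condition at $o_1$ or determines $\delta o_i$ from $\dot f^{(d_i^1 - 1)}(o_i)$ via the nonzero leading derivative $f_M^{(d_i^1)}(o_i)$. The change of variables $g := \dot f / f_M$ then yields
\begin{equation*}
  g(z) = \frac{\delta\lambda}{\lambda_M} - \sum_{i \geq 2} \frac{d_i^0\, \delta z_i}{z - z_i} + \sum_{j \geq 2} \frac{d_j^\infty\, \delta p_j}{z - p_j},
\end{equation*}
and the Leibniz rule together with the vanishing of $f_M',\ldots,f_M^{(d_i^1-1)}$ at $o_i$ gives $\dot f^{(k)}(o_i) = g^{(k)}(o_i)$ for every $k < d_i^1$. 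It therefore suffices to show that the induced vanishing conditions on $g$ force $g \equiv 0$.

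This last step is a direct degree count on the rational self-map $g$ of $\mathbb C^\ast$. The prescribed vanishing amounts to $d_1^1 + \sum_{i \geq 2}(d_i^1 - 1) = N - n_1 + 1 = n_0 + n_\infty - 1$ zeros with multiplicity, using the genus-$0$ identity $n_0 + n_\infty + n_1 = N + 2$. On the other hand $g$ has at most $n_0 - 1$ simple poles among the $z_i$ and at most $n_\infty - 1$ among the $p_j$, is finite at infinity, and hence has degree at most $n_0 + n_\infty - 2$; the strict inequality forces $g$ to be constant, and the vanishing at $o_1$ then forces $g \equiv 0$. Reading off residues recovers $\delta z_i = \delta p_j = 0$ and $\delta\lambda = 0$, after which the triangular relations above give $\delta o_i = 0$ as well. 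The main obstacle I expect is the bookkeeping around the normalization $(z_1, p_1, o_1) = (0, \infty, 1)$: dropping any one of these constraints either adds a pole to $g$ or removes a vanishing condition, and in each case produces exactly one additional kernel direction, so that the three normalizations together remove precisely the three-parameter infinitesimal Möbius action of Section~\ref{sec:prel}. Making the zero count exceed the degree bound by exactly one is what makes the argument work on the nose, and keeping track of which normalization kills which degree of Möbius freedom is the only slightly delicate part of the proof.
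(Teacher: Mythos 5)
Your proposal is correct, but it reaches the key point --- nonsingularity of $J_\Phi$ at $(\lambda_M,\mathcal C_M)$ --- by a genuinely different route than the paper. The paper works with the logarithmic counterpart $\Lambda$ of $\Phi$ and proves a closed-form product formula for $\det J_\Lambda$ at the exact constellation (a confluent Cauchy-type determinant, evaluated by extracting the common denominator and divisibility/degree arguments), concluding nonvanishing from non-degeneracy of $\mathcal C_M$; it then checks via the chain rule that $J_\Phi=J_\Lambda$ at that point, and finishes with the same standard Newton argument you outline. You instead show the kernel of $J_\Phi$ is trivial directly: the variation $g=\dot f/f_M$ is exactly the differential of $\log f$ (so you are implicitly working with the paper's $\Lambda$ too), the kernel equations impose $n_0+n_\infty-1$ zeros with multiplicity on a rational function with at most $n_0+n_\infty-2$ simple poles and finite at infinity, forcing $g\equiv 0$, after which residues give $\delta\lambda=\delta z_i=\delta p_j=0$ and the triangular equations give $\delta o_i=0$. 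Since the matrix is square ($N$ variables against $N$ equations), trivial kernel suffices. Your argument is shorter and more conceptual, and it makes transparent why the three normalizations $z_1=0$, $p_1=\infty$, $o_1=1$ exactly absorb the infinitesimal M\"obius action, whereas the paper's computation yields an explicit formula for the Jacobian determinant, which carries more information (e.g., how the Jacobian degenerates as locations collide) than bare nonvanishing. Two small points you should make explicit: the elimination of $\delta o_i$ uses $f_M^{(d_i^1)}(o_i)\neq 0$, which follows from exactness together with balancedness (the local orders at the $o_i$ are at least $d_i^1$ and sum to the degree $N$, hence are exactly $d_i^1$); and the zero/degree comparison should be phrased for $g$ viewed as a map to the Riemann sphere, noting that all prescribed zeros are finite while $g$ has no pole at infinity, so a nonconstant $g$ would have degree at most $n_0+n_\infty-2$, contradicting the count.
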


It will be more convenient to set  up the computations in the proof in
terms of  logarithmic derivatives, so let  $g_{\lambda,\mathcal C}(z)$
be  a determination  of $\log  (f_{(\lambda,\mathcal C)}(z))$  that is
jointly continuous in $(\lambda,\mathcal C)$  in a neighborhood $V$ of
$(\lambda_M,\mathcal C_M)$ and in $z$  in neighborhoods of each of the
$o_i$, and chosen  such that $g_{(\lambda_M, \mathcal  C_M)}(o_i) = 0$
for  every  $i  \in \{  1,  \ldots,  n_1  \}$.  Let $\Lambda$  be  the
counterpart of $\Phi$  defined in $V$ in terms of  $g$ instead of $f$,
namely:
$$\Lambda(\lambda, \mathcal C) := (g(o_1), g'(o_1), \ldots,
g^{(d_1^1-1)}(o_1),   g(o_2),   g'(o_2),   \ldots,   g^{(d^1_{n_1}-1)}
(o_{n_1})).$$

\begin{lemma}
  There is a constant $C\in\mathbb  C \setminus \{0\}$, depending only
  on  the signature  $(d^0, d^\infty,  d^1)$, such  that the  Jacobian
  determinant of $\Lambda$ at $(\lambda_M, \mathcal C_M)$ is given by
  $$\frac C \lambda \frac
  {\prod_{1  \leq  i <  j  \leq  n_1} (o_j-o_i)^{\tilde  d^1_i  \tilde
      d^1_j}. \prod_{2 \leq i < j \leq n_0} (z_j-z_i). \prod_{2 \leq i
      <    j     \leq    n_\infty}     (f_j-f_i).    \prod_{i=2}^{n_0}
    \prod_{j=2}^{n_\infty}   (f_j-z_i)}    {\prod_{i=1}^{n_1}   \left(
      \prod_{j=2}^{n_0}            (z_j-o_i)^{\tilde           d^1_i}.
      \prod_{j=2}^{n_\infty}   (f_j-o_i)^{\tilde  d^1_i}   \right)},$$
  where   $\tilde  d_i^1   :=   d_i^1-1+\delta_i^1$.  In   particular,
  $\det J_\Lambda((\lambda_M, \mathcal C_M)) \neq 0$.
\end{lemma}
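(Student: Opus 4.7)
The plan is to exploit the sparsity of $J_\Lambda$ at $(\lambda_M,\mathcal C_M)$ and reduce the determinant to a confluent Cauchy-type one. Throughout it is convenient to work with $g=\log f$, so that $\Lambda_{(i,k)}=g^{(k)}(o_i)$. The first key observation is that $o_j$ (for $j\geq 2$) enters $\Lambda$ only via the evaluation point of block $j$, whence $\partial_{o_j}g^{(k)}(o_i)=\delta_{ij}\,g^{(k+1)}(o_j)$. At the exact point all $g^{(m)}(o_j)$ with $m\leq d_j^1-1$ vanish, so the column indexed by $o_j$ has a single nonzero entry $g^{(d_j^1)}(o_j)$ located at row $(j,d_j^1-1)$. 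Expanding $\det J_\Lambda$ successively along the $n_1-1$ columns $o_2,\dots,o_{n_1}$ therefore yields
\[
\det J_\Lambda \;=\; \pm\,\Big(\prod_{j=2}^{n_1} g^{(d_j^1)}(o_j)\Big)\cdot \det M',
\]
where $M'$ is the $(N-n_1+1)\times(N-n_1+1)$ submatrix with columns $\lambda,z_2,\dots,z_{n_0},p_2,\dots,p_{n_\infty}$ and rows $(i,k)$ for $0\leq k\leq \tilde d_i^1-1$.

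The factors $g^{(d_j^1)}(o_j)$ have a clean closed form. At an exact constellation the polynomial $\lambda\prod_i(z-z_i)^{d_i^0}-\prod_l(z-p_l)^{d_l^\infty}$ has degree $N$ and leading coefficient $\lambda$, vanishes to order $d_j^1$ at every $o_j$, and since $\sum d_j^1=N$ it must equal $\lambda\prod_j(z-o_j)^{d_j^1}$. Hence
\[
f(z)-1 \;=\; \lambda\,\frac{\prod_j (z-o_j)^{d_j^1}}{\prod_l(z-p_l)^{d_l^\infty}},
\]
and since the first $d_j^1-1$ derivatives of $g$ vanish at $o_j$ one reads off $g^{(d_j^1)}(o_j)=f^{(d_j^1)}(o_j)=d_j^1!\,\lambda\,\prod_{k\neq j}(o_j-o_k)^{d_k^1}/\prod_l(o_j-p_l)^{d_l^\infty}$.

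The matrix $M'$ itself is of generalized Cauchy type. A direct differentiation gives $M'_{(i,k),\lambda}=\delta_{k,0}/\lambda$, $M'_{(i,k),z_j}=(-1)^{k-1}k!\,d_j^0/(o_i-z_j)^{k+1}$ and $M'_{(i,k),p_j}=(-1)^k k!\,d_j^\infty/(o_i-p_j)^{k+1}$. After pulling the row factors $(-1)^{k-1}k!$ and the column constants $d_j^0,-d_j^\infty,1/\lambda$ out of the determinant, one is left with a matrix whose non-special columns have entries $1/(o_i-y_l)^{k+1}$ (with $y_l$ running over the $z_j$'s and $p_j$'s) and whose single special column is $\pm\delta_{k,0}$. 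That special column is the scaling limit $\lim_{y\to\infty}y/(o_i-y)^{k+1}=-\delta_{k,0}$, so one introduces an auxiliary parameter $y$, applies the standard confluent Cauchy formula
\[
\det\!\Big[\tfrac{1}{(o_i-y_l)^{k+1}}\Big] \;=\; \pm\,\frac{\prod_{i<j}(o_j-o_i)^{\tilde d_i^1\tilde d_j^1}\prod_{l<l'}(y_{l'}-y_l)}{\prod_{i,l}(o_i-y_l)^{\tilde d_i^1}}
\]
(with confluence of multiplicity $\tilde d_i^1$ at each $o_i$), multiplies by $y$ and sends $y\to\infty$. The balance between the $y^{n_0+n_\infty-2}$ in the numerator and the $y^{n_0+n_\infty-1}$ in the denominator produces exactly the Vandermonde-plus-cross-term products $\prod_{2\leq i<j\leq n_0}(z_j-z_i)$, $\prod_{2\leq i<j\leq n_\infty}(p_j-p_i)$, $\prod_{i,j}(p_j-z_i)$ of the statement, together with the exponents $\tilde d_i^1\tilde d_j^1$ on the $(o_j-o_i)$ and the denominator $\prod_{i,l}(o_i-y_l)^{\tilde d_i^1}$.

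Finally, combining the two ingredients, the $\prod_{k\neq j}(o_j-o_k)^{d_k^1}$ coming from the $g^{(d_j^1)}(o_j)$ merge with the Vandermonde of the Cauchy formula; the excess $\lambda^{n_1-1}$ and the $\prod_l(o_j-p_l)^{d_l^\infty}$ denominators that the $g^{(d_j^1)}(o_j)$ contribute are eliminated using the exactness identity $\lambda\prod_i(o_j-z_i)^{d_i^0}=\prod_l(o_j-p_l)^{d_l^\infty}$ (that is, $f(o_j)=1$); and all remaining factorials, signs, and the constants $d_j^0,d_j^\infty,d_j^1!$ are collected into the signature-dependent constant $C$. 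Non-vanishing of $\det J_\Lambda$ is then immediate because at a non-degenerate constellation every factor in the resulting numerator and denominator is nonzero. The most delicate step is the Cauchy computation of the third paragraph, both for the $y\to\infty$ regularization and for the careful tracking of signs and factorials; the combination in the final step is then essentially algebraic bookkeeping.
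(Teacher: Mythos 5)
Your overall route is the paper's: compute the entries of $J_\Lambda$ explicitly, use the vanishing of $g^{(k)}(o_j)$ for $k<d_j^1$ at the exact point to expand along the $o_j$-directions, and evaluate the remaining $(N-n_1+1)$-square matrix in the variables $\lambda,z_j,f_j$ as a confluent-Cauchy-type determinant. Your treatment of that reduced determinant (pulling out factorials, the $d_j$'s and $1/\lambda$, viewing the $\lambda$-column as the $y\to\infty$ limit of a generic column, and invoking the confluent Cauchy formula) is correct, and is arguably cleaner than the paper's argument, which instead clears denominators and shows the resulting polynomial is divisible by the various differences (via equal rows and the expansion \eqref{eq:expdet}) before matching degrees to conclude the ratio is constant.

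The gap is in your final paragraph. You rightly keep the factors $\prod_{j=2}^{n_1} g^{(d_j^1)}(o_j)$ produced by the expansion (the paper's displayed matrix silently writes $1$ for these entries), and your closed form for them is right once the products are understood to omit the pole $p_1=\infty$ forced by the normalization: using $f(o_j)=1$ one gets $g^{(d_j^1)}(o_j)=d_j^1!\,\prod_{k\neq j}(o_j-o_k)^{d_k^1}\big/\prod_i(o_j-z_i)^{d_i^0}$. But these factors do not ``merge'' into the claimed expression by the bookkeeping you describe: multiplying them into the Cauchy formula raises the exponent of $(o_k-o_j)$ for $2\leq j<k$ from $\tilde d_j^1\tilde d_k^1=(d_j^1-1)(d_k^1-1)$ to $d_j^1 d_k^1+1$ (and to $d_1^1 d_j^1$ for pairs containing $o_1$), and introduces denominators $(o_j-z_i)^{d_i^0}$ for all $i$ --- including powers of $o_j$ itself, since $z_1=0$ --- none of which appear in the statement. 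So what your computation actually proves is $\det J_\Lambda=\frac{c}{\lambda}\bigl(\prod_{j\geq 2} g^{(d_j^1)}(o_j)\bigr)R$, with $R$ the displayed rational expression and $c$ a signature constant; this does give the conclusion that matters, $\det J_\Lambda\neq 0$ (each retained factor is nonzero at a non-degenerate exact constellation, $d_j^1$ being the exact ramification order), but it is not the stated identity unless you additionally show that $\prod_{j\geq 2} g^{(d_j^1)}(o_j)$ is itself a constant depending only on the signature at exact constellations --- which you neither prove nor is it an identity of rational functions. You should either supply that extra argument or record the corrected formula; note that the paper's own proof faces the mirror image of the same issue, since it treats those single nonzero entries as if they were constants when developing along the last $n_1-1$ lines, so your more careful accounting has in fact exposed a discrepancy rather than resolved it.
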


\begin{proof}
  We  first  need  to  compute  the entries  of  the  Jacobian  matrix
  $J_\Lambda((\lambda_M, \mathcal C_M))$.  The iterated derivatives of
  $g$         are          given         for          $k>0$         by
  $$\frac  {(-1)^{k+1}} {(k-1)!}  g^{(k)}(w) =  \sum_{i=1}^{n_0} \frac
  {d^0_i}  {(w-z_i)^k}  -   \sum_{i=1}^{n_\infty}  \frac  {d^\infty_i}
  {(w-z_i)^k}$$
  so     the     partial     derivatives     are     all     explicit:
  $$\partial_\lambda  g^{(k)}(o_i)  =   \frac  1  \lambda  \delta_k^0,
  \qquad     \partial_{o_j}      g^{(k)}(o_i)     =     g^{(k+1)}(o_i)
  \delta_i^j,$$
  $$\partial_{z_j}   g^{(k)}(o_i)  =   \frac  {d_j^0   (-1)^{k+1}  k!}
  {(o_i-z_j)^{k+1}},  \qquad   \partial_{f_j}  g^{(k)}(o_i)   =  \frac
  {d_j^\infty        (-1)^{k+1}        k!}        {(o_i-f_j)^{k+1}}.$$
  In addition,  at the point  $(\lambda_M,\mathcal C_M)$ we  know that
  $g^{(k)}(o_i)=0$ as  soon as $k <  d_i^1$, so up to  terms depending
  only on the  signature (and which are products  of $(-1)^k$, $d_j$-s
  and factorials), the Jacobian determinant is proportional to
  $$\adjustbox{max width=\linewidth}{\(\begin{vmatrix}
    1/\lambda & 0 \cdots & 0 & 1/\lambda & 0 \cdots & 0 &\cdots &
    1/\lambda & 0 \cdots & 0 \\
    (o_1-z_2)^{-1} &  \cdots &  (o_1-z_2)^{-d^1_1} &  (o_2-z_2)^{-1} &
    \cdots & (o_2-z_2)^{-d^1_2} & \cdots &
    (o_{n_1}-z_2)^{-1} & \cdots & (o_{n_1}-z_2)^{-d^1_{n_1}} \\
    \vdots & & \vdots & \vdots & & \vdots & \vdots & \vdots & & \vdots
    \\
    (o_1-z_{n_0})^{-1}   &    \cdots   &    (o_1-z_{n_0})^{-d^1_1}   &
    (o_2-z_{n_0})^{-1} & \cdots & (o_2-z_{n_0})^{-d^1_2} & \cdots &
    (o_{n_1}-z_{n_0})^{-1} & \cdots & (o_{n_1}-z_{n_0})^{-d^1_{n_1}} \\
    (o_1-f_2)^{-1} &  \cdots &  (o_1-f_2)^{-d^1_1} &  (o_2-f_2)^{-1} &
    \cdots & (o_2-f_2)^{-d^1_2} & \cdots &
    (o_{n_1}-f_2)^{-1} & \cdots & (o_{n_1}-f_2)^{-d^1_{n_1}} \\
    \vdots & & \vdots & \vdots & & \vdots & \vdots & \vdots & & \vdots
    \\
    (o_1-f_{n_\infty})^{-1} &  \cdots &  (o_1-f_{n_\infty})^{-d^1_1} &
    (o_2-f_{n_\infty})^{-1}  & \cdots  & (o_2-f_{n_\infty})^{-d^1_2}  &
    \cdots & (o_{n_1}-f_{n_\infty})^{-1} & \cdots
    & (o_{n_1}-f_{n_\infty})^{-d^1_{n_1}} \\
    0 & \ldots & 0 & 0 & \ldots & 1 & \cdots & 0 & \cdots & 0 \\
    \vdots & & \vdots & \vdots & & \vdots & \vdots & \vdots & & \vdots
    \\
    0 & \ldots & 0 & 0 & \ldots & 0 & \cdots & 0 & \cdots & 1
  \end{vmatrix}\)}$$
  where  the columns  are the  components of  $\Lambda$ and  the lines
  correspond  to  partial  derivatives  with  respect,  in  order,  to
  $\lambda$,  the  $(z_i)_{i\geq2}$,   the  $(f_i)_{i\geq2}$  and  the
  $(o_i)_{i\geq2}$ (remember that we  normalized all constellations to
  have $z_1=0$,  $o_1=1$ and  $f_1=\infty$, so they  do not  appear as
  variables here, and  that this makes the  matrix square). Developing
  the  determinant along  its last  $(n_1-1)$ lines  shows that  it is
  equal to
  $$\adjustbox{max width=\linewidth}{\(\begin{vmatrix}
    1/\lambda & 0 \cdots & 0 & 1/\lambda & 0 \cdots & 0 &\cdots &
    1/\lambda & 0 \cdots & 0 \\
    (o_1-z_2)^{-1} &  \cdots &  (o_1-z_2)^{-d^1_1} &  (o_2-z_2)^{-1} &
    \cdots & (o_2-z_2)^{1-d^1_2} & \cdots &
    (o_{n_1}-z_2)^{-1} & \cdots & (o_{n_1}-z_2)^{1-d^1_{n_1}} \\
    \vdots & & \vdots & \vdots & & \vdots & \vdots & \vdots & & \vdots
    \\
    (o_1-z_{n_0})^{-1}   &    \cdots   &    (o_1-z_{n_0})^{-d^1_1}   &
    (o_2-z_{n_0})^{-1} & \cdots & (o_2-z_{n_0})^{1-d^1_2} & \cdots &
    (o_{n_1}-z_{n_0})^{-1} & \cdots & (o_{n_1}-z_{n_0})^{1-d^1_{n_1}} \\
    (o_1-f_2)^{-1} &  \cdots &  (o_1-f_2)^{-d^1_1} &  (o_2-f_2)^{-1} &
    \cdots & (o_2-f_2)^{1-d^1_2} & \cdots &
    (o_{n_1}-f_2)^{-1} & \cdots & (o_{n_1}-f_2)^{1-d^1_{n_1}} \\
    \vdots & & \vdots & \vdots & & \vdots & \vdots & \vdots & & \vdots
    \\
    (o_1-f_{n_\infty})^{-1} &  \cdots &  (o_1-f_{n_\infty})^{-d^1_1} &
    (o_2-f_{n_\infty})^{-1} & \cdots & (o_2-f_{n_\infty})^{1-d^1_2} &
    \cdots     &     (o_{n_1}-f_{n_\infty})^{-1}    &     \cdots     &
    (o_{n_1}-f_{n_\infty})^{1-d^1_{n_1}}
  \end{vmatrix}\)}$$
  which can  now be identified  as a  rational fraction in  the $z_i$,
  $f_i$ and $o_i$.

  The denominator is easier to compute  first: it just consists in the
  product  of  the  terms  $(o_i-z_j)^k$ and  $(o_i-f_j)^k$  with  the
  highest    power    appearing    in    the    determinant,    namely
  $d_i^1-1+\delta_i^1  = \tilde  d_i^1$  (which is  the  width of  the
  matrix block corresponding to $o_i$). Multiplying each line by those
  factors which appear in it, and the first one by $\lambda$, one gets
  a matrix $A$ with polynomial entries  in the $z_i$, $f_i$ and $o_i$;
  let $P$  be the determinant  of that matrix.  To compute it,  we now
  forget about the origin of the matrix and note that if any two among
  $\{z_i\} \cup \{f_i\}$  are equal, then two of the  lines of $A$ are
  equal, hence $P$ is divisible by  the product of all the differences
  between these values.

  If on the other hand $o_i=o_j$ for  some $i \neq j$, then two blocks
  of vertical lines become equal, so $P$ is divisible by some power of
  $(o_i-o_j)$. Getting  the value of  the power  is a little  bit more
  involved  but again  can be  done  very explicitly.  Start with  the
  following identity (where $a \neq x$ and $k\in\mathbb Z_+$):
  \begin{equation}
    \label{eq:expdet}
    \frac  1 {a-x}  = \frac  1 a  +  \frac x  {a^2} +  \cdots +  \frac
    {x^{k-1}} {a^{k}} + \frac {x^{k}} {a^{k} (a-x)}.
  \end{equation}
  This can be applied  to get an expansion of the  first column in the
  block  corresponding   to  $o_j$:  for  the   second  line,  letting
  $a=(o_i-z_2)$,  $x=(o_i-o_j)$  and $k=d_i^1-1+\delta_i^1$  gives  an
  expansion of  $(o_j-z_2)^{-1}$ in terms  of the entries in  the same
  line  in the  block of  $o_i$, with  coefficients depending  only on
  $(o_i-o_j)$  and hence  being  the  same across  all  lines, plus  a
  remainder where $(o_i-o_j)^k$ is in factor: more explicitly,
  \begin{equation}
    \frac  1  {o_j-z_2}  =   \frac  1  {o_i-z_2}  +  \frac
    {o_i-o_j} {(o_i-z_2)^2} + \cdots + \frac
    {(o_i-o_j)^{k-1}}   {(o_i-z_2)^{k}}    +   \frac   {(o_i-o_j)^{k}}
    {(o_i-z_2)^{k} (o_j-z_2)}.
  \end{equation}

  The  next columns  can  be  expanded in  a  similar  way, where  the
  expansion of  the $\ell$-th column  in the  block of $o_j$  uses the
  last $(d_i^1-1+\delta_i^1)-(\ell-1)$  column of  the block  of $o_i$
  and the remainders  in the first $(\ell-1)$ columns of  the block of
  $o_j$,  with  coefficients  depending  only  on  $(o_i-o_j)$  and  a
  remainder term where the same power of $(o_i-o_j)$ as before factors
  out.  These  expansions  are obtained  from  successive  derivatives
  of~\eqref{eq:expdet} with  respect to $x$;  the first one  being for
  instance
  $$\frac 1 {(a-x)^2} =  \frac 1 {a^2}  + \frac {2x} {a^3}
  \cdots +  \frac {(k-1)x^{k-2}}  {a^{k}} +  \frac {k  x^{k-1}} {a^{k}
    (a-x)} + \frac {x^k} {a^k (a-x)^2}.$$
  Overall,  the power  of  $(o_i-o_j)$  in $P$  obtained  this way  is
  exactly the product of the widths of the corresponding blocks in the
  matrix.

  To summarize, $P$ is divisible by  the numerator in the statement of
  the  theorem. Matching  the  degrees shows  that  the ratio  between
  $\det J_\Lambda$ and the formula in  the statement of the lemma is a
  constant, which is the main claim.  The fact that the determinant is
  not  zero   is  then   a  direct  consequence   of  the   fact  that
  $\mathcal C_M$ is non-degenerate.
\end{proof}

\begin{proof}[Proof of Theorem~\ref{thm:newton}]
  First, we can  replace $f$ with $e^g$ in the  expression for $\Phi$,
  and compute the  partial derivatives appearing in  $J_\Phi$ in terms
  of  those appearing  in $L_\Lambda$.  By repeated  use of  the chain
  rule, commutativity of derivatives and  the fact that $\mathcal C_M$
  is exact, we get for example
  $$\partial_{z_i} [(f_{\lambda, \mathcal C})^{(j)}(o_k)]
  = (f_{\lambda,  \mathcal C}  \partial_{z_i} g_{\lambda,  \mathcal C}
  )^{(j)}(o_k) =  \sum_{\ell=0}^j \binom j \ell  (f_{\lambda, \mathcal
    C}^{(\ell)}  \partial_{z_i}   g_{\lambda,  \mathcal  C}^{(j-\ell)}
  )(o_k) =  \partial_{z_i} [(g_{\lambda, \mathcal C}  )^{(j)}(o_k)] $$
  at the  point $(\lambda_M, \mathcal  C_M)$ as  soon as $j  < d^1_k$.
  Similar  computations  in the  other  variables  show that  in  fact
  $J_\Lambda$    and    $J_\Phi$    are    equal    at    the    point
  $(\lambda_M,     \mathcal      C_M)$     and      in     particular,
  $J_\Phi((\lambda_M, \mathcal C_M))$ is non-singular.

  The remainder of  the proof is a completely  standard application of
  the usual  Newton algorithm:  it suffices to  use the  smoothness of
  $\Phi$  in all  variables to  show that  $J_\Phi((\lambda,\Phi))$ is
  non-singular in  a neighborhood of $(\lambda_M,  \mathcal C_M)$, and
  to expand $\Psi$ at $(\lambda_M, \mathcal C_M)$ to obtain a bound of
  the                                                             form
  $$\| \Psi((\lambda, \mathcal C)) - (\lambda_M, \mathcal C_M) \| \leq
  C \|  (\lambda, \mathcal C)  - (\lambda_M, \mathcal C_M)  \|^2$$ for
  $(\lambda, \mathcal C)$ close enough to $(\lambda_M, \mathcal C_M)$.
\end{proof}

To summarize our  construction so far, we are essentially  done at the
theoretical  level:  there is  a  neighborhood  $U$  of our  point  of
interest $(\lambda_M, \mathcal C_M)$  from which Newton's algorithm is
guaranteed to  converge, and  iterating the  subdivision in  the first
step  will bring  the  approximation within  $U$  after finitely  many
steps.  In  addition, since  the  convergence  is quadratic,  one  can
iterate  a  computer implementation  until  two  successive values  are
indistinguishable  within   machine  precision  (or   chosen  extended
precision) at  very little cost, meaning  that we can get  a numerical
approximation of  $(\lambda_M, \mathcal C_M)$ with  arbitrarily chosen
precision in a reasonable computing time.

\bigskip

A significant issue  in practice though is that none  of the two steps
is quantitative. The speed of convergence of circle packing embeddings
to  the  uniformizing  map  is   not  well  understood  (although  our
simulations  as  well   as  the  numerical  experiments   at  the  end
of~\cite{BK:cp} suggest that  it should be polynomial  in the diameter
of the smallest circle and  exponential in the number of refinements),
and the neighborhood  $U$ we would be able to  explicitly construct by
keeping  track of  all constants  implicit  in the  proof above  would
certainly  be   much  smaller   than  the   basin  of   attraction  of
$(\lambda_M, \mathcal C_M)$.

What this means  is that the number of subdivisions  we should perform
in  step~1 to  be certain  to have  convergence would  make that  step
computationally unfeasible. We now turn to a way around this issue.

\subsection{Step 3: Topological verification}
\label{sec:verif}

As it  turns out,  in practice  the domain  of attraction  of Newton's
method in the  case we are interested  in seems to be  quite large, at
least much larger than continuity  arguments for $\Phi$ would predict.
A  natural procedure  is therefore  to start  Newton's algorithm  from
successive iterations of step~1, starting in fact with no iteration at
all,  and in  each  case to  see  if the  iteration  converges or  not
(quadratic  convergence when  the iteration  is successful  means that
recognizing convergence is very quick).

If  the  iteration does  not  converge,  or  converges to  a  singular
constellation, then we simply refine once more. If it does converge to
a  non-singular pair  $(\lambda_\infty, \mathcal  C_\infty)$, then  we
found a  constellation with the  right signature. It remains  to check
whether it actually is the one we were looking for (and subdivide once
more if it is not).

One can  simply do  it visually,  from a  picture of  the sign  of the
imaginary part of the function $f_{\lambda_\infty, \mathcal C_\infty}$
like those in the figures  of this paper, recovering the triangulation
and just  checking that it  is isomorphic to  $M$. The same  thing can
probably  be  automated,  although  it  is  not  at  all  clear  which
algorithms could  be proved to  work; constructing a  region adjacency
graph  from  the  picture  (thus  recovering the  dual  graph  of  the
triangulation) would be an option, but we did not attempt to implement
it.

In  practice, except  on specially  tuned cases  designed to  test the
numerical stability of  the whole method (similar for  instance to the
map  shown  in  Figure~\ref{fig:star}  with  vertices  of  very  large
degree), the limit is almost always the right one --- though of course
this  statement is  not  of a  mathematical nature  and  is merely  an
empirical observation.

\subsection{Step 4: Lattice reduction}
\label{sec:lll}

At   this   point,   we   obtained  a   numerical   approximation   of
$(\lambda_M, \mathcal  C_M)$ with the  normalization chosen in  such a
way that the function $f_{\lambda_M, \mathcal C_M}$ has a zero at $0$,
a pole at $\infty$ and takes the  value one at $1$. We know in advance
that with such normalization, the  locations of the other zeros, poles
and  ones are  all algebraic  numbers: indeed,  there is  a choice  of
normalization such that all locations  are algebraic and mapping it to
our preferred one  can in turn be  done by applying a  Möbius map with
algebraic coefficients.

This means  that for each of  those algebraic numbers, we  are able to
obtain an approximation to any precision that we want, in a reasonable
computing  time. Identifying  the minimal  polynomial of  an algebraic
number given  such an  approximation is  a much-studied  question, and
there exist a variety  of classical \emph{integer relation algorithms}
to do  it, based on  lattice-reduction methods. Going into  a detailed
description of  such algorithms  is besides the  point of  this paper;
several  implementations   are  freely  available,  and   the  results
presented below  were obtained  using one  of them  (specifically FPLLL,
see~\cite{fplll}).

\bigskip

While the  empirical observation is  that these methods work,  again a
comment of  a more  theoretical nature  is in  order: even  though the
numbers that  we are  interested in  are all  algebraic, the  bounds on
their degree that one can derive from the proof of Belyi's theorem are
enormous and the precision approximation  that is required for lattice
reduction to provably find the right solution is therefore enormous as
well, to the point that making  those bounds quantitative is likely to
be of little practical use.

A more  interesting question is  that of the choice  of normalization.
The one we chose was convenient from the implementation point of view,
as it made the formulas in Step~2 explicit, but there is no reason why
it would lead to the algebraic  numbers of the lowest possible degree,
thus  compounding the  previous remark.  Other choices  are of  course
available: for  instance one might  want the sum with  multiplicity of
all zeros to  be $0$ (to get a vanishing  coefficient in the numerator
of $f_{\lambda_M,  \mathcal C_M}$).  More convincingly,  if $M$  has a
non-trivial automorphism group, so does its constellation, and one may
want  (some   of)  the   corresponding  automorphisms  to   be  affine
transformations of the plane.

This  however has  little impact  on  the implementation,  as one  can
always go from one normalization  to another after having obtained the
approximation in Step~3.

\subsection{Step 5: Algebraic verification}
\label{sec:algver}

The last  validation step is of  a purely algebraic nature:  given the
list of  locations as  roots of integer  polynomials, verify  that the
constellation  they  form is  indeed  exact,  in  which case  we  have
achieved our programme  of computing the Belyi  function associated to
the  map $M$  explicitly. This  can be  rewritten as  a collection  of
algebraic  equations  that  they  must satisfy,  and  can  be  checked
explicitly either by hand (for smaller cases) or by a computer algebra
system; again, if the validation failed, it means that the polynomials
obtained in  the previous step are  erroneous, and one can  re-run the
lattice  reduction  from  higher precision  approximations,  with  the
guarantee that after finitely many round trips the right solution will
be found.

\begin{remark}
  This last  step is usually  presented as  the starting point  of the
  computation of  Belyi functions:  namely, starting from  a hypermap,
  obtain  a system  of polynomial  equations  in the  location of  the
  ramification points,  and then use  elimination theory to  solve the
  system (usually using  Gröbner bases). This works  well in practice,
  but does not  seem to extend well to higher  genus beyond very small
  maps.
\end{remark}

\section{Numerical computation of Belyi maps: genus $1$}
\label{sec:torus}

We  now turn  to  the case  of  triangulations of  the  torus, and  to
elliptic Belyi functions. As mentioned  earlier, the main structure of
the  construction is  extremely similar  to the  one described  in the
previous section,  and we  will focus on  the differences  rather than
giving   a  complete   description,  using   the  same   notation  for
corresponding but slightly different objects  where it doesn't lead to
confusion.

\begin{figure}
  \centering
  \includegraphics[width=.8\linewidth]{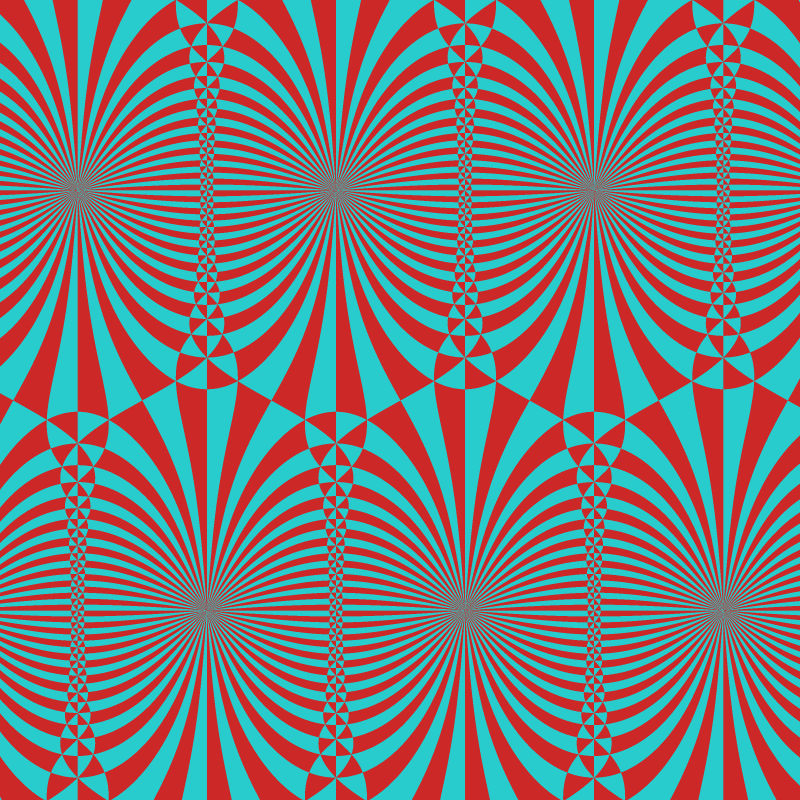}
  \caption{Uniformization  of  a triangulation  of  genus  1 with  one
    vertex  of high  degree  (which can  be used  as  a test-case  for
    numerical stability of the algorithm).}
  \label{fig:star}
\end{figure}

We  will work  on the  space of  all non-degenerate  constellations of
genus  $1$ and  of  a  given signature  defined  on  a complex  torus;
however, there is an invariant (the  modulus of the torus) which we do
not know in advance. This means that the objects we will work with are
in fact triples of the form $(\tau, \lambda, \mathcal C)$ where $\tau$
is  the modulus  of a  torus, $\lambda$  is a  normalizing factor  and
$\mathcal     C$    is     a     constellation     in    the     torus
$\mathbb  T_\tau =  \mathbb C  / (\mathbb  Z +  \tau \mathbb  Z)$, and
having such  a representation of the  Riemann surface on which  we are
working will be very convenient.

So    again,    fix    three     tuples    of    positive    integers,
$d^{0}      =      (d_i^{0})_{1      \leq      i      \leq      n_0}$,
$d^{\infty}   =  (d_i^{\infty})_{1   \leq   i   \leq  n_\infty}$   and
$d^{1}  = (d_i^{1})_{1  \leq  i  \leq n_1}$  with  the  same sum  $N$,
satisfying the genus-$1$ condition
$$n_0+n_\infty+n_1  =   N,$$  and  let  $\mathfrak   T  =  \mathfrak
T_{d^0,d^\infty,d^1}$
be  the  space  of   all  triples  $(\tau,\lambda,\mathcal  C)$  where
$\tau  \in  \mathbb  H$,  $\lambda\in\mathbb C  \setminus  \{0\}$  and
$\mathcal C$  is a  non-degenerate constellations on  $\mathbb T_\tau$
with  signature  $(d^0,d^\infty,d^1)$.  $\mathfrak  T$  is  a  complex
manifold of  dimension $N+2$. We  will always  see $\mathbb T$  as the
quotient     of    the     complex     plane     by    the     lattice
$\mathbb Z + \tau \mathbb Z$,  represent the locations of the stars of
$\mathcal C$ as complex numbers and  identify $\mathfrak T$ as an open
subset of  $\mathbb C^{N+2}$; if  $\mathcal C = (Z,P,O)  \in \mathfrak
C$, let
$$f_{\tau,  \lambda, \mathcal  C}  : z  \mapsto  \lambda \frac  {\prod
  \zeta(z-z_i)^{d_i^0}}       {\prod      \zeta(z-p_i)^{d_i^\infty}}$$
be     defined     as     above,      and     consider     the     map
$\Phi : \mathfrak T \to \mathbb C^N$ defined by
$$\Phi((\tau, \lambda, \mathcal C)) := (f(o_1)-1, f'(o_1), \ldots,
f^{(d_1^1-1)}(o_1),   f(o_2)-1,  f'(o_2),   \ldots,  f^{(d^1_{n_1}-1)}
(o_{n_1})),$$
where to  lighten notation we  let $f =  f_{\tau,\lambda,\mathcal C}$.

To normalize the map embeddings and ensure that $f$ is doubly periodic
with periods $1$ and $\tau$ (or in  other words, to have it defined on
the torus $\mathbb T_\tau$), let $\mathfrak  T^\ast$ be the set of all
triples  $(\tau,\lambda,\mathcal   C)  \in  \mathfrak  T$   such  that
\begin{equation}
  \label{eq:zp0}
  \sum d_i^0 z_i = \sum d_i^\infty  p_i = 0.
\end{equation}
$\mathfrak T^\ast$ is a manifold of complex dimension $N$ which can be
seen as an open subset of  $\mathbb C^{N}$ by listing the locations of
all  the   stars  except   for  $z_1$   and  $p_1$;   the  restriction
$\Phi^\ast : \mathfrak  T^\ast \to \mathbb C^N$ can thus  be seen as a
map from  an open subset of  $\mathbb C^N$ to $\mathbb  C^N$, which is
clearly analytic  in all its  variables. We are interested  in finding
one specific  preimage of  $(0,\ldots,0)$ by  $\Phi$, out  of finitely
many.

\bigskip

We  are  now  exactly  in  the same  situation  as  before,  the  only
difference being the definition of $f$,  so we will briefly review the
relevant changes needed to the previous steps.

\subsection{Step 1: Approaching the complex structure}
\label{sec:cp1}

Here the  construction starts from  a triangulation $T$ of  genus $1$,
and  again  there exists  a  unique  $\tau_T  \in  \mathbb H$  and  an
essentially unique circle  packing on the torus  of modulus ${\tau_T}$
having  the combinatorics  of $T$.  Equivalently, the  universal cover
$\hat T$ of  $T$ is a doubly periodic triangulation  of the plane, and
there is  an essentially unique  locally finite circle packing  of the
plane  with the  combinatorics of  $\hat  T$; this  circle packing  is
automatically doubly  periodic, and  can be normalized  in such  a way
that it has the two periods $\{1,\tau_T\}$.

We want to  define the starting point for the  Newton iterative scheme
using $\tau_T$ as the modulus and  the locations of the centers of the
circles as  those of  the stars;  and here  as well,  under successive
refinements, these converge to the constellation corresponding to $M$.
One slight  problem is that there  is no reason why  the normalization
condition~\eqref{eq:zp0} can be  made to hold by a  suitable choice of
embedding of  a circle  packing ---  but we can  at least  ensure that
$\sum  d_i^0 z_i=0$,  which  makes the  embedding  unique, and  simply
ignore the sum of the $d_i^\infty p_i$  (which will tend to $0$ as the
triangulation is refined more and more).

\subsection{Step 2: Newton's method}
\label{sec:newton1}

This   part   works   exactly   the   same   way   as   before,   with
$$\Psi((\tau, \lambda, \mathcal  C)) := (\tau, \lambda,  \mathcal C) -
J_{\Phi}  ((\tau,  \lambda,  \mathcal  C))^{-1}  \Phi((\tau,  \lambda,
\mathcal                                                        C)),$$
and the  only point to  check is  the non-singularity of  the Jacobian
matrix at the  point $(\tau_M, \lambda_M, \mathcal C_M)$.  This can be
done in a very  similar way as on the sphere, as  far as the variables
besides  $\tau$   are  concerned:   for  fixed  $\tau$   the  Jacobian
determinant is an elliptic function in all its other variables, and it
factorizes for the same reason as in the rational case. Alternatively,
one can also get the non-singularity of the Jacobian by moving all the
locations  of  the points  closer  and  closer  to the  origin  (while
satisfying  the  constraints  listed  above), and  noticing  that  the
asymptotic behavior  of the determinant, once  properly normalized, is
given by the case of genus $0$.

The variable $\tau$ is a bit more problematic, because the derivatives
in $\tau$ of  the functions involved in $\Phi$ are  not as explicit as
the derivatives in $z$. Instead, one way  to go around the issue is to
argue that a different way of  proceeding, from the data of a toroidal
triangulation, is  as follows:  take a  large square  of $N  \times N$
periods in the  universal cover of $M$,  and stitch it with  a copy of
itself along the  boundary of the square to obtain  a triangulation of
the sphere. This triangulation can  be uniformized (for instance using
the method  in the  previous section)  and the  local behavior  of the
uniformizing function near the center  of the square, as $N\to\infty$,
becomes periodic and asymptotic to the uniformizing map of $M$ itself.
In particular, the  fact that the Jacobian is  non-degenerate in genus
$1$ can be extracted from the corresponding statement in genus $0$.

\begin{remark}
  While that  last remark gives  an alternative strategy to  solve our
  initial problem  as well, it  has two  main drawbacks: first,  it is
  difficult  to estimate  the level  of  precision that  one would  be
  obtained as a function of $N$  (presumably it would be polynomial in
  $N$);  second,  and  very  related,  is that  to  get  the  kind  of
  approximation needed to apply the next  step, the value of $N$ would
  have to be  taken so large that  there would be no  hope of actually
  implementing  the programme.  It  would still  be  a possibility  to
  obtain the  starting point of  step~$2$ in this way,  thus replacing
  step $1$; but we did not try this route.
\end{remark}

\subsection{Steps 3, 4 and 5: Identification and validation}
\label{sec:verif1}

Here, not  much needs to  be changed at  all except for  notation; the
questions  raised, whether  the successive  objects that  we construct
correspond to the one we are looking for, are the same, and the method
is exactly parallel to that used in genus $0$.

One point needs to  be made though. In the sphere,  the freedom in the
choice of  embedding meant that  finding the constellation  leading to
algebraic numbers  with minimal  degree was an  issue. Here,  there is
much more rigidity from the conditions~\eqref{eq:zp0}, which itself is
very natural, and there is no choice at all, beyond the usual discrete
$SL_2(\mathbb Z)$ action,  in the modulus $\tau$;  indeed the elliptic
curve defined  by $M$ is  uniquely defined. This is  quite convenient,
especially since determining  $\tau$ was our primary goal  in the case
of genus $1$.

\section{A few examples}
\label{sec:ex}

The  front  page  of  this  paper shows  the  Belyi  function  of  the
tripartite refinement  of a uniformly sampled  random triangulation of
the sphere  with $15$ vertices.  We validated  the algorithm of  a few
known cases, such as those  listed in~\cite{AAD:catalog}, but chose to
focus on genus $1$ for the examples of this section.

We list here the Belyi  functions obtained from all the triangulations
of the torus with up to $3$ vertices and all degrees at least equal to
$3$. In  each case, the  triangulation is refined into  its tripartite
refinement  and we  list the  hypermap description  of the  original
triangulation, a  graphical representation  of its  combinatorics, and
then the sign of the imaginary part of the Belyi function and in a few
instances  the exact  value of  the associated  $j$-invariant (or  its
minimal polynomial  in case that  is more useful).  For triangulations
with $4$ vertices we  give one example in the same  form, and only the
graph of the covering map for the others.

\bigskip

In several cases  the original triangulation is  itself tripartite, so
the  representation is  not  minimal;  but the  value  of  $j$ is  not
affected by the  refinement, and the dessin associated  to the initial
triangulation is a subset of that of the refined one.

Each finite triangulation $T$ has countably many coverings $(T_{k,l})$
that  are  themselves  finite triangulations,  having  as  fundamental
domains unions  of finitely many  copies of the fundamental  domain of
$T$ arranged  as a $k  \times l$ rectangle.  Those are omitted  in the
catalog below, and only the minimal one is listed.

\subsection{Size $1$}
\label{sec:exs1}

\newcommand{\exone}[6]{\par\medskip\noindent
  \begin{minipage}{.48\linewidth}\centering
    \adjustbox{max width=\linewidth}{$\sigma = #3$}

    \adjustbox{max width=\linewidth}{$\alpha = #4$}

    \adjustbox{max width=\linewidth}{$\varphi = #5$}

    \bigskip\maxsizebox{\linewidth}{.7\linewidth}{\includegraphics{fig/ex1/s#1/i#2_map.pdf}}

    \vspace{-3ex}$#6$
  \end{minipage}
  \hfill\raisebox{-.24\linewidth}{\includegraphics[width=.48\linewidth]{fig/ex1/s#1/i#2_pic.png}}
  \par\medskip
}

There is only one triangulation of  the torus with one vertex, and its
natural  embedding  is  the   usual  triangular  lattice  composed  of
equilateral triangles:

\exone{1}{1}
{(0\,5\,2\,1\,4\,3)}
{(0\,1)\,(2\,3)\,(4\,5)}
{(0\,2\,4)\,(1\,3\,5)}
{$j=0$}

\subsection{Size $2$}
\label{sec:exs2}

In addition to the double coverings of the previous one, there are two
minimal  triangulations of  the  torus  with two  vertices:  one is  a
refinement  of the  triangular lattice  (with the  same $j$  invariant
equal to $0$) and the other is the face-centered square lattice.

\exone{2}{3}
{(0\,7\,8\,5\,2\,1\,9\,10\,4)\,(3\,11\,6)}
{(0\,1)\,(2\,3)\,(4\,5)\,(6\,7)\,(8\,9)\,(10\,11)}
{(0\,2\,6)\,(1\,4\,8)\,(3\,5\,10)\,(7\,11\,9)}
{$j=0$}

\exone{2}{1}
{(0\,7\,10\,4)\,(1\,9\,6\,3\,11\,8\,5\,2)}
{(0\,1)\,(2\,3)\,(4\,5)\,(6\,7)\,(8\,9)\,(10\,11)}
{(0\,2\,6)\,(1\,4\,8)\,(3\,5\,10)\,(7\,9\,11)}
{$j=1\,728$}

\subsection{Size $3$}
\label{sec:exs3}

There  are $9$  triangulations  of  the torus  with  $3$ vertices  and
minimal  degree at  least $3$  that  are not  coverings of  previously
displayed cases. They turn out to all have rational $j$-invariants.

\exone{3}{6}
{(0\,7\,17\,14\,9\,5\,12\,6\,3\,15\,10\,4)\,(1\,8\,2)\,(11\,16\,13)}
{(0\,1)\,(2\,3)\,(4\,5)\,(6\,7)\,(8\,9)\,(10\,11)\,(12\,13)\,(14\,15)\,(16\,17)}
{(0\,2\,6)\,(1\,4\,9)\,(3\,8\,14)\,(5\,10\,13)\,(7\,12\,16)\,(11\,15\,17)}
{$j=0$}

\exone{3}{1}
{(0\,7\,16\,10\,4)\,(1\,9\,12\,6\,3\,11\,14\,8\,5\,2)\,(13\,15\,17)}
{(0\,1)\,(2\,3)\,(4\,5)\,(6\,7)\,(8\,9)\,(10\,11)\,(12\,13)\,(14\,15)\,(16\,17)}
{(0\,2\,6)\,(1\,4\,8)\,(3\,5\,10)\,(7\,12\,17)\,(9\,14\,13)\,(11\,16\,15)}
{$j=1875$}

\exone{3}{4}
{(0\,7\,14\,10\,4)\,(1\,9\,17\,15\,13\,8\,5\,2)\,(3\,11\,16\,12\,6)}
{(0\,1)\,(2\,3)\,(4\,5)\,(6\,7)\,(8\,9)\,(10\,11)\,(12\,13)\,(14\,15)\,(16\,17)}
{(0\,2\,6)\,(1\,4\,8)\,(3\,5\,10)\,(7\,12\,15)\,(9\,13\,16)\,(11\,14\,17)}
{$j=-3\,072$}

\exone{3}{5}
{(0\,5\,13\,10\,6\,2\,1\,7\,14\,8\,4\,3)\,(9\,16\,12)\,(11\,17\,15)}
{(0\,1)\,(2\,3)\,(4\,5)\,(6\,7)\,(8\,9)\,(10\,11)\,(12\,13)\,(14\,15)\,(16\,17)}
{(0\,2\,4)\,(1\,3\,6)\,(5\,8\,12)\,(7\,10\,15)\,(9\,14\,17)\,(11\,13\,16)}
{$j=-3\,072$}

\exone{3}{3}
{(0\,7\,17\,15\,10\,4)\,(1\,9\,16\,13\,14\,8\,5\,2)\,(3\,11\,12\,6)}
{(0\,1)\,(2\,3)\,(4\,5)\,(6\,7)\,(8\,9)\,(10\,11)\,(12\,13)\,(14\,15)\,(16\,17)}
{(0\,2\,6)\,(1\,4\,8)\,(3\,5\,10)\,(7\,12\,16)\,(9\,14\,17)\,(11\,15\,13)}
{$j = \frac{35\,152}{9}$}

\exone{3}{9}
{(0\,5\,13\,14\,8\,4\,3)\,(1\,7\,17\,15\,10\,6\,2)\,(9\,16\,11\,12)}
{(0\,1)\,(2\,3)\,(4\,5)\,(6\,7)\,(8\,9)\,(10\,11)\,(12\,13)\,(14\,15)\,(16\,17)}
{(0\,2\,4)\,(1\,3\,6)\,(5\,8\,12)\,(7\,10\,16)\,(9\,14\,17)\,(11\,15\,13)}
{$j=-\frac{33\,268\,701}{256}$}

\exone{3}{10}
{(0\,7\,16\,8\,2\,1\,11\,14\,9\,4)\,(3\,15\,12\,6)\,(5\,17\,13\,10)}
{(0\,1)\,(2\,3)\,(4\,5)\,(6\,7)\,(8\,9)\,(10\,11)\,(12\,13)\,(14\,15)\,(16\,17)}
{(0\,2\,6)\,(1\,4\,10)\,(3\,8\,14)\,(5\,9\,16)\,(7\,12\,17)\,(11\,13\,15)}
{$j=\frac{8\,429\,568}{15\,625}$}

\exone{3}{2}
{(0\,7\,14\,8\,5\,2\,1\,9\,16\,10\,4)\,(3\,11\,12\,6)\,(13\,17\,15)}
{(0\,1)\,(2\,3)\,(4\,5)\,(6\,7)\,(8\,9)\,(10\,11)\,(12\,13)\,(14\,15)\,(16\,17)}
{(0\,2\,6)\,(1\,4\,8)\,(3\,5\,10)\,(7\,12\,15)\,(9\,14\,17)\,(11\,16\,13)}
{$j = - \frac{1\,636\,015\,539}{41\,229\,056}$}

\exone{3}{8}
{(0\,5\,13\,14\,11\,8\,4\,3)\,(1\,7\,15\,16\,10\,6\,2)\,(9\,17\,12)}
{(0\,1)\,(2\,3)\,(4\,5)\,(6\,7)\,(8\,9)\,(10\,11)\,(12\,13)\,(14\,15)\,(16\,17)}
{(0\,2\,4)\,(1\,3\,6)\,(5\,8\,12)\,(7\,10\,14)\,(9\,11\,16)\,(13\,17\,15)}
{$j=\frac{116\,634\,423\,954\,432}{1\,977\,326\,743}$}

\subsection{Size $4$}
\label{sec:exs4}

With $4$  vertices there are  already too many triangulations  to make
listing them  very useful. We  still display  one in detail,  and will
only show the picture for the others:

\exone{4}{59}
{(0\,7\,19\,22\,14\,9\,4)\,(1\,11\,21\,23\,16\,8\,2)\,(3\,15\,20\,12\,6)\,(5\,17\,18\,13\,10)}
{(0\,1)\,(2\,3)\,(4\,5)\,(6\,7)\,(8\,9)\,(10\,11)\,(12\,13)\,(14\,15)\,(16\,17)\,(18\,19)\,(20\,21)\,(22\,23)}
{(0\,2\,6)\,(1\,4\,10)\,(3\,8\,14)\,(5\,9\,16)\,(7\,12\,18)\,(11\,13\,20)\,(15\,22\,21)\,(17\,23\,19)}
{$j^2 - 914\,416 \, j + 590\,816\,592 = 0$}

This    is    the    triangulation    used   as    an    example    in
\cite{BI:triangulations,SV:curves},  where   the  elliptic   curve  is
derived formally. It  is defined over $\mathbb Q[\sqrt7]$  and one can
check that the  $j$-invariant obtained by our method is  the right one
(the discriminant of the polynomial  above is $7 \cdot (345\,128)^2$).
Note though that the proof  as detailed in \cite{BI:triangulations} is
4  pages long  and moreover  relies  very strongly  on the  additional
symmetries  of the  triangulation ---  in  the picture  above one  can
readily see  that the embedding  is symmetric under reflection  by the
line going through  $0$ and $1+\tau$ and to another  one orthogonal to
it,  which  both correspond  to  automorphisms  of  order $2$  of  the
triangulation.

This   is  a   strong  indication   that  very   small  examples   are
computationally  difficult to  address, and  slightly larger  ones, or
even cases of small size but no symmetry, are beyond these methods. In
comparison, the programme described  here gets the exact constellation
to machine precision (\emph{i.e.}, to  within $10^{-13}$) in less than
a tenth of a second and  producing enough digits to obtain the minimal
polynomial for  $j$ takes of the  order of $20$ seconds  on a standard
laptop.

\bigskip

\newcommand{\exones}[6]{\includegraphics[width=.24\linewidth]{fig/ex1/s#1/i#2_pic.png} }

\begin{spacing}{9}
\noindent
\exones{4}{3}
{(0\,5\,13\,22\,14\,8\,4\,3)\,(1\,7\,17\,23\,18\,10\,6\,2)\,(9\,21\,19\,12)\,(11\,20\,15\,16)}
{(0\,1)\,(2\,3)\,(4\,5)\,(6\,7)\,(8\,9)\,(10\,11)\,(12\,13)\,(14\,15)\,(16\,17)\,(18\,19)\,(20\,21)\,(22\,23)}
{(0\,2\,4)\,(1\,3\,6)\,(5\,8\,12)\,(7\,10\,16)\,(9\,14\,20)\,(11\,18\,21)\,(13\,19\,23)\,(15\,22\,17)}
{$512 j^2 - 11734208 j + 6975534818043 = 0$}
\exones{4}{2}
{(0\,7\,15\,18\,12\,8\,5\,2\,1\,9\,17\,20\,14\,10\,4)\,(3\,11\,6)\,(13\,22\,16)\,(19\,21\,23)}
{(0\,1)\,(2\,3)\,(4\,5)\,(6\,7)\,(8\,9)\,(10\,11)\,(12\,13)\,(14\,15)\,(16\,17)\,(18\,19)\,(20\,21)\,(22\,23)}
{(0\,2\,6)\,(1\,4\,8)\,(3\,5\,10)\,(7\,11\,14)\,(9\,12\,16)\,(13\,18\,23)\,(15\,20\,19)\,(17\,22\,21)}
{$j = - \frac{20720464}{15625}$}
\exones{4}{1}
{(0\,7\,17\,14\,10\,4)\,(1\,9\,19\,22\,16\,13\,8\,5\,2)\,(3\,11\,21\,18\,12\,6)\,(15\,23\,20)}
{(0\,1)\,(2\,3)\,(4\,5)\,(6\,7)\,(8\,9)\,(10\,11)\,(12\,13)\,(14\,15)\,(16\,17)\,(18\,19)\,(20\,21)\,(22\,23)}
{(0\,2\,6)\,(1\,4\,8)\,(3\,5\,10)\,(7\,12\,16)\,(9\,13\,18)\,(11\,14\,20)\,(15\,17\,22)\,(19\,21\,23)}
{(4796 + 2872 I) z + (9995832 + 9478701 I)}
\exones{4}{4}
{(0\,5\,13\,18\,10\,6\,2\,1\,7\,17\,14\,8\,4\,3)\,(9\,20\,12)\,(11\,22\,16)\,(15\,23\,19\,21)}
{(0\,1)\,(2\,3)\,(4\,5)\,(6\,7)\,(8\,9)\,(10\,11)\,(12\,13)\,(14\,15)\,(16\,17)\,(18\,19)\,(20\,21)\,(22\,23)}
{(0\,2\,4)\,(1\,3\,6)\,(5\,8\,12)\,(7\,10\,16)\,(9\,14\,21)\,(11\,18\,23)\,(13\,20\,19)\,(15\,17\,22)}
{15193211036787 z^2 + -6177454460476992 z + -549860446486391685376}
\exones{4}{5}
{(0\,7\,16\,10\,4)\,(1\,9\,21\,22\,17\,13\,14\,8\,5\,2)\,(3\,11\,23\,18\,12\,6)\,(15\,19\,20)}
{(0\,1)\,(2\,3)\,(4\,5)\,(6\,7)\,(8\,9)\,(10\,11)\,(12\,13)\,(14\,15)\,(16\,17)\,(18\,19)\,(20\,21)\,(22\,23)}
{(0\,2\,6)\,(1\,4\,8)\,(3\,5\,10)\,(7\,12\,17)\,(9\,14\,20)\,(11\,16\,22)\,(13\,18\,15)\,(19\,23\,21)}
{z + 5000}
\exones{4}{6}
{(0\,7\,19\,14\,8\,5\,2\,1\,9\,16\,10\,4)\,(3\,11\,20\,12\,6)\,(13\,22\,18)\,(15\,23\,21\,17)}
{(0\,1)\,(2\,3)\,(4\,5)\,(6\,7)\,(8\,9)\,(10\,11)\,(12\,13)\,(14\,15)\,(16\,17)\,(18\,19)\,(20\,21)\,(22\,23)}
{(0\,2\,6)\,(1\,4\,8)\,(3\,5\,10)\,(7\,12\,18)\,(9\,14\,17)\,(11\,16\,21)\,(13\,20\,23)\,(15\,19\,22)}
{(30929104296875 + 19124209375000 I) z^2 + (-24523251599223168 + 1413449295656576 I) z + (-1642715590854894592 + 1507190869777049600 I)}
\exones{4}{7}
{(0\,7\,19\,15\,16\,10\,4)\,(1\,9\,18\,13\,22\,14\,8\,5\,2)\,(3\,11\,20\,12\,6)\,(17\,23\,21)}
{(0\,1)\,(2\,3)\,(4\,5)\,(6\,7)\,(8\,9)\,(10\,11)\,(12\,13)\,(14\,15)\,(16\,17)\,(18\,19)\,(20\,21)\,(22\,23)}
{(0\,2\,6)\,(1\,4\,8)\,(3\,5\,10)\,(7\,12\,18)\,(9\,14\,19)\,(11\,16\,21)\,(13\,20\,23)\,(15\,22\,17)}
{(3371174124589 - 229057872665 I) z^5 + (-11201152191024270 + 3325979797628043 I) z^4 + (-963338002220292 - 13989320370349287 I) z^3 + (17371569747049000 - 8103932165040713 I) z^2 + (-31746287436015590 + 4985400396450271 I) z + (-1696354221179599 - 4143980944541759 I)}
\exones{4}{8}
{(0\,5\,13\,19\,14\,8\,4\,3)\,(1\,7\,17\,22\,18\,10\,6\,2)\,(9\,21\,16\,11\,12)\,(15\,23\,20)}
{(0\,1)\,(2\,3)\,(4\,5)\,(6\,7)\,(8\,9)\,(10\,11)\,(12\,13)\,(14\,15)\,(16\,17)\,(18\,19)\,(20\,21)\,(22\,23)}
{(0\,2\,4)\,(1\,3\,6)\,(5\,8\,12)\,(7\,10\,16)\,(9\,14\,20)\,(11\,18\,13)\,(15\,19\,22)\,(17\,21\,23)}
{48828125 z + 4388755356576}
\exones{4}{9}
{(0\,5\,13\,14\,8\,4\,3)\,(1\,7\,17\,20\,12\,9\,18\,10\,6\,2)\,(11\,22\,16)\,(15\,21\,23\,19)}
{(0\,1)\,(2\,3)\,(4\,5)\,(6\,7)\,(8\,9)\,(10\,11)\,(12\,13)\,(14\,15)\,(16\,17)\,(18\,19)\,(20\,21)\,(22\,23)}
{(0\,2\,4)\,(1\,3\,6)\,(5\,8\,12)\,(7\,10\,16)\,(9\,14\,19)\,(11\,18\,23)\,(13\,20\,15)\,(17\,22\,21)}
{69328955078125 z^2 + -8525947821075000000 z + 42043929007034219215104}
\exones{4}{10}
{(0\,7\,14\,8\,5\,2\,1\,9\,21\,16\,10\,4)\,(3\,11\,18\,12\,6)\,(13\,23\,20\,15)\,(17\,22\,19)}
{(0\,1)\,(2\,3)\,(4\,5)\,(6\,7)\,(8\,9)\,(10\,11)\,(12\,13)\,(14\,15)\,(16\,17)\,(18\,19)\,(20\,21)\,(22\,23)}
{(0\,2\,6)\,(1\,4\,8)\,(3\,5\,10)\,(7\,12\,15)\,(9\,14\,20)\,(11\,16\,19)\,(13\,18\,22)\,(17\,21\,23)}
{(30929104296875 - 19124209375000 I) z^2 + (-24523251599223168 - 1413449295656576 I) z + (-1642715590854894592 - 1507190869777049600 I)}
\exones{4}{11}
{(0\,7\,17\,18\,12\,9\,5\,15\,22\,16\,10\,4)\,(1\,8\,2)\,(3\,13\,20\,14\,11\,6)\,(19\,23\,21)}
{(0\,1)\,(2\,3)\,(4\,5)\,(6\,7)\,(8\,9)\,(10\,11)\,(12\,13)\,(14\,15)\,(16\,17)\,(18\,19)\,(20\,21)\,(22\,23)}
{(0\,2\,6)\,(1\,4\,9)\,(3\,8\,12)\,(5\,10\,14)\,(7\,11\,16)\,(13\,18\,21)\,(15\,20\,23)\,(17\,22\,19)}
{1000000000 z^2 + -1776069315584 z + 88103303951473}
\exones{4}{12}
{(0\,7\,13\,20\,14\,10\,4)\,(1\,9\,6\,3\,11\,19\,16\,12\,8\,5\,2)\,(15\,22\,18)\,(17\,23\,21)}
{(0\,1)\,(2\,3)\,(4\,5)\,(6\,7)\,(8\,9)\,(10\,11)\,(12\,13)\,(14\,15)\,(16\,17)\,(18\,19)\,(20\,21)\,(22\,23)}
{(0\,2\,6)\,(1\,4\,8)\,(3\,5\,10)\,(7\,9\,12)\,(11\,14\,18)\,(13\,16\,21)\,(15\,20\,23)\,(17\,19\,22)}
{39135393 z + -47061251888}
\exones{4}{13}
{(0\,7\,16\,10\,4)\,(1\,9\,18\,12\,6\,3\,11\,20\,14\,8\,5\,2)\,(13\,23\,21\,17)\,(15\,22\,19)}
{(0\,1)\,(2\,3)\,(4\,5)\,(6\,7)\,(8\,9)\,(10\,11)\,(12\,13)\,(14\,15)\,(16\,17)\,(18\,19)\,(20\,21)\,(22\,23)}
{(0\,2\,6)\,(1\,4\,8)\,(3\,5\,10)\,(7\,12\,17)\,(9\,14\,19)\,(11\,16\,21)\,(13\,18\,22)\,(15\,20\,23)}
{10546875 z + -26410345352}
\exones{4}{14}
{(0\,7\,19\,16\,10\,4)\,(1\,9\,20\,12\,6\,3\,11\,14\,8\,5\,2)\,(13\,22\,18)\,(15\,17\,23\,21)}
{(0\,1)\,(2\,3)\,(4\,5)\,(6\,7)\,(8\,9)\,(10\,11)\,(12\,13)\,(14\,15)\,(16\,17)\,(18\,19)\,(20\,21)\,(22\,23)}
{(0\,2\,6)\,(1\,4\,8)\,(3\,5\,10)\,(7\,12\,18)\,(9\,14\,21)\,(11\,16\,15)\,(13\,20\,23)\,(17\,19\,22)}
{(263181652701 - 51613946582 I) z + (-497779137396288 + 41885412136432 I)}
\exones{4}{15}
{(0\,7\,19\,16\,10\,4)\,(1\,9\,12\,6\,3\,11\,20\,14\,8\,5\,2)\,(13\,15\,22\,18)\,(17\,23\,21)}
{(0\,1)\,(2\,3)\,(4\,5)\,(6\,7)\,(8\,9)\,(10\,11)\,(12\,13)\,(14\,15)\,(16\,17)\,(18\,19)\,(20\,21)\,(22\,23)}
{(0\,2\,6)\,(1\,4\,8)\,(3\,5\,10)\,(7\,12\,18)\,(9\,14\,13)\,(11\,16\,21)\,(15\,20\,23)\,(17\,19\,22)}
{(263181652701 + 51613946582 I) z + (-497779137396288 - 41885412136432 I)}
\exones{4}{16}
{(0\,7\,16\,10\,4)\,(1\,9\,21\,19\,14\,8\,5\,2)\,(3\,11\,23\,18\,12\,6)\,(13\,20\,15\,22\,17)}
{(0\,1)\,(2\,3)\,(4\,5)\,(6\,7)\,(8\,9)\,(10\,11)\,(12\,13)\,(14\,15)\,(16\,17)\,(18\,19)\,(20\,21)\,(22\,23)}
{(0\,2\,6)\,(1\,4\,8)\,(3\,5\,10)\,(7\,12\,17)\,(9\,14\,20)\,(11\,16\,22)\,(13\,18\,21)\,(15\,19\,23)}
{z + -23328}
\exones{4}{17}
{(0\,5\,9\,13\,20\,14\,10\,8\,6\,2\,1\,7\,4\,3)\,(11\,19\,16\,12)\,(15\,22\,18)\,(17\,23\,21)}
{(0\,1)\,(2\,3)\,(4\,5)\,(6\,7)\,(8\,9)\,(10\,11)\,(12\,13)\,(14\,15)\,(16\,17)\,(18\,19)\,(20\,21)\,(22\,23)}
{(0\,2\,4)\,(1\,3\,6)\,(5\,7\,8)\,(9\,10\,12)\,(11\,14\,18)\,(13\,16\,21)\,(15\,20\,23)\,(17\,19\,22)}
{55 z^6 + 24344 z^5 + 9424 z^4 + 25760 z^3 + -2743 z^2 + 20591 z + 6689}
\exones{4}{18}
{(0\,7\,19\,16\,10\,4)\,(1\,9\,23\,21\,17\,14\,8\,5\,2)\,(3\,11\,20\,12\,6)\,(13\,22\,15\,18)}
{(0\,1)\,(2\,3)\,(4\,5)\,(6\,7)\,(8\,9)\,(10\,11)\,(12\,13)\,(14\,15)\,(16\,17)\,(18\,19)\,(20\,21)\,(22\,23)}
{(0\,2\,6)\,(1\,4\,8)\,(3\,5\,10)\,(7\,12\,18)\,(9\,14\,22)\,(11\,16\,21)\,(13\,20\,23)\,(15\,17\,19)}
{z + (-864 + 4752 I)}
%
% \exones{4}{19}
%{((0\,7\,19\,16\,10\,4)\,(1\,9\,23\,18\,13\,14\,8\,5\,2)\,(3\,11\,20\,12\,6)\,(15\,21\,17\,22)}
%{((0\,1)\,(2\,3)\,(4\,5)\,(6\,7)\,(8\,9)\,(10\,11)\,(12\,13)\,(14\,15)\,(16\,17)\,(18\,19)\,(20\,21)\,(22\,23)}
%{((0\,2\,6)\,(1\,4\,8)\,(3\,5\,10)\,(7\,12\,18)\,(9\,14\,22)\,(11\,16\,21)\,(13\,20\,15)\,(17\,19\,23)}
% { z + (-864 - 4752 I)}
%
\exones{4}{20}
{(0\,7\,21\,16\,10\,5\,19\,23\,20\,12\,4)\,(1\,11\,8\,2)\,(3\,15\,18\,13\,6)\,(9\,17\,22\,14)}
{(0\,1)\,(2\,3)\,(4\,5)\,(6\,7)\,(8\,9)\,(10\,11)\,(12\,13)\,(14\,15)\,(16\,17)\,(18\,19)\,(20\,21)\,(22\,23)}
{(0\,2\,6)\,(1\,4\,10)\,(3\,8\,14)\,(5\,12\,18)\,(7\,13\,20)\,(9\,11\,16)\,(15\,22\,19)\,(17\,21\,23)}
{5153632 z + -6761990971}
%
% \exones{4}{21}
%{((0 5 13 8 4 3) (1 7 15 10 6 2) (9 17 23 20 16 12) (11 19 22 21 18 14)}
%{((0 1) (2 3) (4 5) (6 7) (8 9) (10 11) (12 13) (14 15) (16 17) (18 19) (20 21) (22 23)}
%{((0 2 4) (1 3 6) (5 8 12) (7 10 14) (9 13 16) (11 15 18) (17 20 22) (19 21 23)}
% {1200409531 z^7 + -337808188654 z^6 + -1180243359841 z^5 + 615321271747 z^4 + 884315177404 z^3 + -609472281580 z^2 + 1244567530541 z + 8932259321}
%
\exones{4}{22}
{(0\,7\,19\,14\,8\,5\,2\,1\,9\,21\,16\,10\,4)\,(3\,11\,12\,6)\,(13\,17\,22\,18)\,(15\,23\,20)}
{(0\,1)\,(2\,3)\,(4\,5)\,(6\,7)\,(8\,9)\,(10\,11)\,(12\,13)\,(14\,15)\,(16\,17)\,(18\,19)\,(20\,21)\,(22\,23)}
{(0\,2\,6)\,(1\,4\,8)\,(3\,5\,10)\,(7\,12\,18)\,(9\,14\,20)\,(11\,16\,13)\,(15\,19\,22)\,(17\,21\,23)}
{2568640535 z^7 + 677876061660 z^6 + -407807704440 z^5 + 529883815003 z^4 + -1280941806089 z^3 + -210833521054 z^2 + -529643138567 z + 106064986500}
\exones{4}{23}
{(0\,7\,18\,10\,4)\,(1\,8\,2)\,(3\,15\,22\,19\,13\,20\,14\,9\,5\,17\,12\,6)\,(11\,23\,21\,16)}
{(0\,1)\,(2\,3)\,(4\,5)\,(6\,7)\,(8\,9)\,(10\,11)\,(12\,13)\,(14\,15)\,(16\,17)\,(18\,19)\,(20\,21)\,(22\,23)}
{(0\,2\,6)\,(1\,4\,9)\,(3\,8\,14)\,(5\,10\,16)\,(7\,12\,19)\,(11\,18\,22)\,(13\,17\,21)\,(15\,20\,23)}
{(30929104296875 - 19124209375000 I) z^2 + (-24523251599223168 - 1413449295656576 I) z + (-1642715590854894592 - 1507190869777049600 I)}
\exones{4}{24}
{(0 7 8 5 2 1 9 13 20 14 10 4) (3 11 19 16 12 6) (15 22 18) (17 23 21)}
{(0 1) (2 3) (4 5) (6 7) (8 9) (10 11) (12 13) (14 15) (16 17) (18 19) (20 21) (22 23)}
{(0 2 6) (1 4 8) (3 5 10) (7 12 9) (11 14 18) (13 16 21) (15 20 23) (17 19 22)}
{}
\exones{4}{25}
{(0\,7\,23\,17\,19\,14\,9\,22\,12\,4)\,(1\,11\,16\,8\,2)\,(3\,15\,20\,13\,6)\,(5\,21\,18\,10)}
{(0\,1)\,(2\,3)\,(4\,5)\,(6\,7)\,(8\,9)\,(10\,11)\,(12\,13)\,(14\,15)\,(16\,17)\,(18\,19)\,(20\,21)\,(22\,23)}
{(0\,2\,6)\,(1\,4\,10)\,(3\,8\,14)\,(5\,12\,20)\,(7\,13\,22)\,(9\,16\,23)\,(11\,18\,17)\,(15\,19\,21)}
{(117 + 44 I) z + (-37408 + 82192 I)}
\exones{4}{26}
{(0\,5\,13\,14\,8\,4\,3)\,(1\,7\,17\,18\,10\,6\,2)\,(9\,23\,19\,20\,12)\,(11\,22\,15\,21\,16)}
{(0\,1)\,(2\,3)\,(4\,5)\,(6\,7)\,(8\,9)\,(10\,11)\,(12\,13)\,(14\,15)\,(16\,17)\,(18\,19)\,(20\,21)\,(22\,23)}
{(0\,2\,4)\,(1\,3\,6)\,(5\,8\,12)\,(7\,10\,16)\,(9\,14\,22)\,(11\,18\,23)\,(13\,20\,15)\,(17\,21\,19)}
{z^2 + -914416 z + 590816592}
\exones{4}{27}
{(0\,7\,19\,20\,14\,9\,5\,12\,6\,3\,15\,16\,10\,4)\,(1\,8\,2)\,(11\,23\,18\,13)\,(17\,21\,22)}
{(0\,1)\,(2\,3)\,(4\,5)\,(6\,7)\,(8\,9)\,(10\,11)\,(12\,13)\,(14\,15)\,(16\,17)\,(18\,19)\,(20\,21)\,(22\,23)}
{(0\,2\,6)\,(1\,4\,9)\,(3\,8\,14)\,(5\,10\,13)\,(7\,12\,18)\,(11\,16\,22)\,(15\,20\,17)\,(19\,23\,21)}
{3769304304267952128 z^2 + 3284219076931720839168 z + 69206969233824288109873}
\exones{4}{28}
{(0\,7\,21\,16\,8\,2\,1\,11\,22\,14\,9\,4)\,(3\,15\,12\,6)\,(5\,17\,18\,10)\,(13\,23\,19\,20)}
{(0\,1)\,(2\,3)\,(4\,5)\,(6\,7)\,(8\,9)\,(10\,11)\,(12\,13)\,(14\,15)\,(16\,17)\,(18\,19)\,(20\,21)\,(22\,23)}
{(0\,2\,6)\,(1\,4\,10)\,(3\,8\,14)\,(5\,9\,16)\,(7\,12\,20)\,(11\,18\,23)\,(13\,15\,22)\,(17\,21\,19)}
{6561 z + -207646}
\exones{4}{29}
{(0\,7\,21\,22\,14\,9\,17\,20\,12\,4)\,(1\,11\,8\,2)\,(3\,15\,18\,13\,6)\,(5\,19\,23\,16\,10)}
{(0\,1)\,(2\,3)\,(4\,5)\,(6\,7)\,(8\,9)\,(10\,11)\,(12\,13)\,(14\,15)\,(16\,17)\,(18\,19)\,(20\,21)\,(22\,23)}
{(0\,2\,6)\,(1\,4\,10)\,(3\,8\,14)\,(5\,12\,18)\,(7\,13\,20)\,(9\,11\,16)\,(15\,22\,19)\,(17\,23\,21)}
{(117 - 44 I) z + (-37408 - 82192 I)}
\exones{4}{30}
{(0\,7\,19\,16\,10\,4)\,(1\,8\,2)\,(3\,15\,22\,18\,13\,11\,20\,14\,9\,5\,12\,6)\,(17\,23\,21)}
{(0\,1)\,(2\,3)\,(4\,5)\,(6\,7)\,(8\,9)\,(10\,11)\,(12\,13)\,(14\,15)\,(16\,17)\,(18\,19)\,(20\,21)\,(22\,23)}
{(0\,2\,6)\,(1\,4\,9)\,(3\,8\,14)\,(5\,10\,13)\,(7\,12\,18)\,(11\,16\,21)\,(15\,20\,23)\,(17\,19\,22)}
{z^2 + -869824 z + 1840398592}
\exones{4}{31}
{(0\,7\,19\,22\,16\,10\,4)\,(1\,9\,21\,18\,13\,17\,14\,8\,5\,2)\,(3\,11\,12\,6)\,(15\,23\,20)}
{(0\,1)\,(2\,3)\,(4\,5)\,(6\,7)\,(8\,9)\,(10\,11)\,(12\,13)\,(14\,15)\,(16\,17)\,(18\,19)\,(20\,21)\,(22\,23)}
{(0\,2\,6)\,(1\,4\,8)\,(3\,5\,10)\,(7\,12\,18)\,(9\,14\,20)\,(11\,16\,13)\,(15\,17\,22)\,(19\,21\,23)}
{69328955078125 z^2 + -8525947821075000000 z + 42043929007034219215104}
\exones{4}{32}
{(0\,5\,13\,8\,4\,3)\,(1\,7\,15\,22\,16\,10\,6\,2)\,(9\,19\,23\,21\,17\,18\,12)\,(11\,20\,14)}
{(0\,1)\,(2\,3)\,(4\,5)\,(6\,7)\,(8\,9)\,(10\,11)\,(12\,13)\,(14\,15)\,(16\,17)\,(18\,19)\,(20\,21)\,(22\,23)}
{(0\,2\,4)\,(1\,3\,6)\,(5\,8\,12)\,(7\,10\,14)\,(9\,13\,18)\,(11\,16\,21)\,(15\,20\,23)\,(17\,22\,19)}
{1953125 z + 26240594230368}
\exones{4}{33}
{(0\,7\,19\,16\,11\,21\,23\,18\,13\,10\,4)\,(1\,8\,2)\,(3\,15\,20\,12\,6)\,(5\,17\,22\,14\,9)}
{(0\,1)\,(2\,3)\,(4\,5)\,(6\,7)\,(8\,9)\,(10\,11)\,(12\,13)\,(14\,15)\,(16\,17)\,(18\,19)\,(20\,21)\,(22\,23)}
{(0\,2\,6)\,(1\,4\,9)\,(3\,8\,14)\,(5\,10\,16)\,(7\,12\,18)\,(11\,13\,20)\,(15\,22\,21)\,(17\,19\,23)}
{161051 z + -54010152}
\exones{4}{34}
{(0\,7\,17\,20\,14\,11\,6\,3\,13\,22\,16\,10\,4)\,(1\,8\,2)\,(5\,15\,18\,12\,9)\,(19\,21\,23)}
{(0\,1)\,(2\,3)\,(4\,5)\,(6\,7)\,(8\,9)\,(10\,11)\,(12\,13)\,(14\,15)\,(16\,17)\,(18\,19)\,(20\,21)\,(22\,23)}
{(0\,2\,6)\,(1\,4\,9)\,(3\,8\,12)\,(5\,10\,14)\,(7\,11\,16)\,(13\,18\,23)\,(15\,20\,19)\,(17\,22\,21)}
{996144 z^10 + -16552808 z^9 + -195320864 z^8 + -112543706 z^7 + -99441532 z^6 + 176354363 z^5 + -46302054 z^4 + -232550554 z^3 + 146891750 z^2 + -106334632 z + 103684110}
\exones{4}{35}
{(0\,5\,13\,16\,10\,6\,2\,1\,7\,14\,8\,4\,3)\,(9\,21\,18\,12)\,(11\,23\,20\,15)\,(17\,19\,22)}
{(0\,1)\,(2\,3)\,(4\,5)\,(6\,7)\,(8\,9)\,(10\,11)\,(12\,13)\,(14\,15)\,(16\,17)\,(18\,19)\,(20\,21)\,(22\,23)}
{(0\,2\,4)\,(1\,3\,6)\,(5\,8\,12)\,(7\,10\,15)\,(9\,14\,20)\,(11\,16\,22)\,(13\,18\,17)\,(19\,21\,23)}
{1410568077862 z^5 + 5539572014145901 z^4 + -2891744982321219 z^3 + 12985298732686739 z^2 + -13643185382061004 z + 3258853360771286}
\exones{4}{36}
{(0\,5\,13\,22\,14\,8\,4\,3)\,(1\,7\,17\,20\,15\,10\,6\,2)\,(9\,21\,18\,12)\,(11\,23\,19\,16)}
{(0\,1)\,(2\,3)\,(4\,5)\,(6\,7)\,(8\,9)\,(10\,11)\,(12\,13)\,(14\,15)\,(16\,17)\,(18\,19)\,(20\,21)\,(22\,23)}
{(0\,2\,4)\,(1\,3\,6)\,(5\,8\,12)\,(7\,10\,16)\,(9\,14\,20)\,(11\,15\,22)\,(13\,18\,23)\,(17\,19\,21)}
{512 z^2 + -11734208 z + 6975534818043}
\exones{4}{37}
{(0\,7\,16\,10\,4)\,(1\,9\,19\,20\,14\,8\,5\,2)\,(3\,11\,23\,18\,15\,12\,6)\,(13\,21\,22\,17)}
{(0\,1)\,(2\,3)\,(4\,5)\,(6\,7)\,(8\,9)\,(10\,11)\,(12\,13)\,(14\,15)\,(16\,17)\,(18\,19)\,(20\,21)\,(22\,23)}
{(0\,2\,6)\,(1\,4\,8)\,(3\,5\,10)\,(7\,12\,17)\,(9\,14\,18)\,(11\,16\,22)\,(13\,15\,20)\,(19\,23\,21)}
{1977326743 z + 30460567656096}
\exones{4}{38}
{(0\,5\,13\,11\,20\,14\,8\,4\,3)\,(1\,7\,12\,9\,19\,16\,10\,6\,2)\,(15\,22\,18)\,(17\,23\,21)}
{(0\,1)\,(2\,3)\,(4\,5)\,(6\,7)\,(8\,9)\,(10\,11)\,(12\,13)\,(14\,15)\,(16\,17)\,(18\,19)\,(20\,21)\,(22\,23)}
{(0\,2\,4)\,(1\,3\,6)\,(5\,8\,12)\,(7\,10\,13)\,(9\,14\,18)\,(11\,16\,21)\,(15\,20\,23)\,(17\,19\,22)}
{531441 z^3 + -42488333973 z^2 + 366016695860736 z + -13234833427661329552}
\exones{4}{39}
{(0\,7\,21\,23\,18\,13\,6\,3\,15\,20\,12\,4)\,(1\,11\,8\,2)\,(5\,19\,16\,10)\,(9\,17\,22\,14)}
{(0\,1)\,(2\,3)\,(4\,5)\,(6\,7)\,(8\,9)\,(10\,11)\,(12\,13)\,(14\,15)\,(16\,17)\,(18\,19)\,(20\,21)\,(22\,23)}
{(0\,2\,6)\,(1\,4\,10)\,(3\,8\,14)\,(5\,12\,18)\,(7\,13\,20)\,(9\,11\,16)\,(15\,22\,21)\,(17\,19\,23)}
{z}
\exones{4}{40}
{(0\,7\,15\,23\,21\,19\,14\,10\,4)\,(1\,9\,17\,22\,18\,12\,8\,5\,2)\,(3\,11\,6)\,(13\,20\,16)}
{(0\,1)\,(2\,3)\,(4\,5)\,(6\,7)\,(8\,9)\,(10\,11)\,(12\,13)\,(14\,15)\,(16\,17)\,(18\,19)\,(20\,21)\,(22\,23)}
{(0\,2\,6)\,(1\,4\,8)\,(3\,5\,10)\,(7\,11\,14)\,(9\,12\,16)\,(13\,18\,21)\,(15\,19\,22)\,(17\,20\,23)}
{z + -54000}
\exones{4}{41}
{(0\,5\,13\,20\,14\,8\,4\,3)\,(1\,7\,17\,12\,9\,18\,10\,6\,2)\,(11\,23\,21\,16)\,(15\,22\,19)}
{(0\,1)\,(2\,3)\,(4\,5)\,(6\,7)\,(8\,9)\,(10\,11)\,(12\,13)\,(14\,15)\,(16\,17)\,(18\,19)\,(20\,21)\,(22\,23)}
{(0\,2\,4)\,(1\,3\,6)\,(5\,8\,12)\,(7\,10\,16)\,(9\,14\,19)\,(11\,18\,22)\,(13\,17\,21)\,(15\,20\,23)}
{(2298884260292 + 100012551538 I) z^5 + (-100758956854945559 + 44010666654603979 I) z^4 + (160873343514275743 + 54917137216946258 I) z^3 + (-51311348098831066 + 133483663927740121 I) z^2 + (-195017859998852070 + 120676342295702809 I) z + (79172282236877200 + 62209810439439668 I)}
\exones{4}{42}
{(0\,7\,19\,22\,14\,9\,5\,17\,18\,13\,10\,4)\,(1\,8\,2)\,(3\,15\,20\,12\,6)\,(11\,21\,23\,16)}
{(0\,1)\,(2\,3)\,(4\,5)\,(6\,7)\,(8\,9)\,(10\,11)\,(12\,13)\,(14\,15)\,(16\,17)\,(18\,19)\,(20\,21)\,(22\,23)}
{(0\,2\,6)\,(1\,4\,9)\,(3\,8\,14)\,(5\,10\,16)\,(7\,12\,18)\,(11\,13\,20)\,(15\,22\,21)\,(17\,23\,19)}
{(30929104296875 + 19124209375000 I) z^2 + (-24523251599223168 + 1413449295656576 I) z + (-1642715590854894592 + 1507190869777049600 I)}
\exones{4}{43}
{(0\,5\,13\,14\,8\,4\,3)\,(1\,7\,17\,18\,10\,6\,2)\,(9\,23\,16\,11\,20\,12)\,(15\,21\,19\,22)}
{(0\,1)\,(2\,3)\,(4\,5)\,(6\,7)\,(8\,9)\,(10\,11)\,(12\,13)\,(14\,15)\,(16\,17)\,(18\,19)\,(20\,21)\,(22\,23)}
{(0\,2\,4)\,(1\,3\,6)\,(5\,8\,12)\,(7\,10\,16)\,(9\,14\,22)\,(11\,18\,21)\,(13\,20\,15)\,(17\,23\,19)}
{4782969 z^2 + -140664190437568 z + -107680204166497530624}
\exones{4}{44}
{(0\,7\,19\,16\,10\,4)\,(1\,9\,21\,17\,22\,14\,8\,5\,2)\,(3\,11\,20\,15\,12\,6)\,(13\,23\,18)}
{(0\,1)\,(2\,3)\,(4\,5)\,(6\,7)\,(8\,9)\,(10\,11)\,(12\,13)\,(14\,15)\,(16\,17)\,(18\,19)\,(20\,21)\,(22\,23)}
{(0\,2\,6)\,(1\,4\,8)\,(3\,5\,10)\,(7\,12\,18)\,(9\,14\,20)\,(11\,16\,21)\,(13\,15\,22)\,(17\,19\,23)}
{(4796 - 2872 I) z + (9995832 - 9478701 I)}
\exones{4}{45}
{(0\,7\,19\,20\,15\,16\,10\,4)\,(1\,9\,21\,22\,14\,8\,5\,2)\,(3\,11\,12\,6)\,(13\,17\,23\,18)}
{(0\,1)\,(2\,3)\,(4\,5)\,(6\,7)\,(8\,9)\,(10\,11)\,(12\,13)\,(14\,15)\,(16\,17)\,(18\,19)\,(20\,21)\,(22\,23)}
{(0\,2\,6)\,(1\,4\,8)\,(3\,5\,10)\,(7\,12\,18)\,(9\,14\,20)\,(11\,16\,13)\,(15\,22\,17)\,(19\,23\,21)}
{z + -10976}
\exones{4}{46}
{(0\,5\,13\,14\,8\,4\,3)\,(1\,7\,17\,20\,15\,18\,10\,6\,2)\,(9\,21\,23\,19\,12)\,(11\,22\,16)}
{(0\,1)\,(2\,3)\,(4\,5)\,(6\,7)\,(8\,9)\,(10\,11)\,(12\,13)\,(14\,15)\,(16\,17)\,(18\,19)\,(20\,21)\,(22\,23)}
{(0\,2\,4)\,(1\,3\,6)\,(5\,8\,12)\,(7\,10\,16)\,(9\,14\,20)\,(11\,18\,23)\,(13\,19\,15)\,(17\,22\,21)}
{669845673459105 z^4 + 155788641386171663148 z^3 + 75138411067832374629 z^2 + 11526818657062076735 z + -304118614767026389015}
\exones{4}{47}
{(0\,7\,10\,4)\,(1\,9\,19\,16\,12\,6\,3\,11\,13\,20\,14\,8\,5\,2)\,(15\,22\,18)\,(17\,23\,21)}
{(0\,1)\,(2\,3)\,(4\,5)\,(6\,7)\,(8\,9)\,(10\,11)\,(12\,13)\,(14\,15)\,(16\,17)\,(18\,19)\,(20\,21)\,(22\,23)}
{(0\,2\,6)\,(1\,4\,8)\,(3\,5\,10)\,(7\,12\,11)\,(9\,14\,18)\,(13\,16\,21)\,(15\,20\,23)\,(17\,19\,22)}
{15193211036787 z^2 + -6177454460476992 z + -549860446486391685376}
\exones{4}{48}
{(0\,5\,13\,20\,14\,11\,8\,4\,3)\,(1\,7\,15\,22\,16\,10\,6\,2)\,(9\,17\,18\,12)\,(19\,23\,21)}
{(0\,1)\,(2\,3)\,(4\,5)\,(6\,7)\,(8\,9)\,(10\,11)\,(12\,13)\,(14\,15)\,(16\,17)\,(18\,19)\,(20\,21)\,(22\,23)}
{(0\,2\,4)\,(1\,3\,6)\,(5\,8\,12)\,(7\,10\,14)\,(9\,11\,16)\,(13\,18\,21)\,(15\,20\,23)\,(17\,22\,19)}
{(1007830715797 + 956523396018 I) z^5 + (-23152751333306184 - 62228049168901629 I) z^4 + (2345131608540570 + 138240885331428878 I) z^3 + (20522021106106053 + 56791716145333504 I) z^2 + (144600801530787277 - 35136929200177764 I) z + (371860448190474181 - 10001114131898457 I)}
\exones{4}{49}
{(0\,5\,13\,8\,4\,3)\,(1\,7\,15\,16\,10\,6\,2)\,(9\,19\,22\,17\,21\,18\,12)\,(11\,23\,20\,14)}
{(0\,1)\,(2\,3)\,(4\,5)\,(6\,7)\,(8\,9)\,(10\,11)\,(12\,13)\,(14\,15)\,(16\,17)\,(18\,19)\,(20\,21)\,(22\,23)}
{(0\,2\,4)\,(1\,3\,6)\,(5\,8\,12)\,(7\,10\,14)\,(9\,13\,18)\,(11\,16\,22)\,(15\,20\,17)\,(19\,21\,23)}
{4782969 z^2 + -140664190437568 z + -107680204166497530624}
\exones{4}{50}
{(0\,7\,21\,23\,19\,14\,9\,4)\,(1\,11\,22\,16\,8\,2)\,(3\,15\,12\,6)\,(5\,17\,20\,13\,18\,10)}
{(0\,1)\,(2\,3)\,(4\,5)\,(6\,7)\,(8\,9)\,(10\,11)\,(12\,13)\,(14\,15)\,(16\,17)\,(18\,19)\,(20\,21)\,(22\,23)}
{(0\,2\,6)\,(1\,4\,10)\,(3\,8\,14)\,(5\,9\,16)\,(7\,12\,20)\,(11\,18\,23)\,(13\,15\,19)\,(17\,22\,21)}
{729 z + 219488}
\exones{4}{51}
{(0\,5\,10\,6\,2\,1\,7\,15\,18\,12\,8\,4\,3)\,(9\,17\,20\,14\,11)\,(13\,22\,16)\,(19\,21\,23)}
{(0\,1)\,(2\,3)\,(4\,5)\,(6\,7)\,(8\,9)\,(10\,11)\,(12\,13)\,(14\,15)\,(16\,17)\,(18\,19)\,(20\,21)\,(22\,23)}
{(0\,2\,4)\,(1\,3\,6)\,(5\,8\,11)\,(7\,10\,14)\,(9\,12\,16)\,(13\,18\,23)\,(15\,20\,19)\,(17\,22\,21)}
{23899969500 z^6 + 61261757543144 z^5 + -23028739753252 z^4 + -65497515974681 z^3 + -123340630327773 z^2 + 63898121291643 z + 37992815970307}
\exones{4}{52}
{(0\,7\,15\,22\,19\,20\,14\,10\,4)\,(1\,9\,17\,18\,12\,8\,5\,2)\,(3\,11\,6)\,(13\,23\,21\,16)}
{(0\,1)\,(2\,3)\,(4\,5)\,(6\,7)\,(8\,9)\,(10\,11)\,(12\,13)\,(14\,15)\,(16\,17)\,(18\,19)\,(20\,21)\,(22\,23)}
{(0\,2\,6)\,(1\,4\,8)\,(3\,5\,10)\,(7\,11\,14)\,(9\,12\,16)\,(13\,18\,22)\,(15\,20\,23)\,(17\,21\,19)}
{989315216267943 z^4 + 120549271573764602075 z^3 + 15102273256307590681 z^2 + -59850321618912269637 z + 9729207218973661900}
\exones{4}{53}
{(0\,5\,11\,19\,16\,12\,8\,4\,3)\,(1\,7\,13\,20\,14\,10\,9\,6\,2)\,(15\,22\,18)\,(17\,23\,21)}
{(0\,1)\,(2\,3)\,(4\,5)\,(6\,7)\,(8\,9)\,(10\,11)\,(12\,13)\,(14\,15)\,(16\,17)\,(18\,19)\,(20\,21)\,(22\,23)}
{(0\,2\,4)\,(1\,3\,6)\,(5\,8\,10)\,(7\,9\,12)\,(11\,14\,18)\,(13\,16\,21)\,(15\,20\,23)\,(17\,19\,22)}
{531441 z^3 + -42488333973 z^2 + 366016695860736 z + -13234833427661329552}
\exones{4}{54}
{(0\,7\,18\,10\,5\,17\,20\,14\,9\,4)\,(1\,11\,22\,16\,8\,2)\,(3\,15\,12\,6)\,(13\,21\,23\,19)}
{(0\,1)\,(2\,3)\,(4\,5)\,(6\,7)\,(8\,9)\,(10\,11)\,(12\,13)\,(14\,15)\,(16\,17)\,(18\,19)\,(20\,21)\,(22\,23)}
{(0\,2\,6)\,(1\,4\,10)\,(3\,8\,14)\,(5\,9\,16)\,(7\,12\,19)\,(11\,18\,23)\,(13\,15\,20)\,(17\,22\,21)}
{31250 z + -57960603}
\exones{4}{55}
{(0\,5\,13\,8\,4\,3)\,(1\,7\,15\,22\,18\,12\,9\,19\,16\,10\,6\,2)\,(11\,20\,14)\,(17\,23\,21)}
{(0\,1)\,(2\,3)\,(4\,5)\,(6\,7)\,(8\,9)\,(10\,11)\,(12\,13)\,(14\,15)\,(16\,17)\,(18\,19)\,(20\,21)\,(22\,23)}
{(0\,2\,4)\,(1\,3\,6)\,(5\,8\,12)\,(7\,10\,14)\,(9\,13\,18)\,(11\,16\,21)\,(15\,20\,23)\,(17\,19\,22)}
{z^2 + -869824 z + 1840398592}
\exones{4}{56}
{(0\,7\,17\,20\,15\,10\,4)\,(1\,9\,21\,23\,19\,14\,8\,5\,2)\,(3\,11\,18\,12\,6)\,(13\,22\,16)}
{(0\,1)\,(2\,3)\,(4\,5)\,(6\,7)\,(8\,9)\,(10\,11)\,(12\,13)\,(14\,15)\,(16\,17)\,(18\,19)\,(20\,21)\,(22\,23)}
{(0\,2\,6)\,(1\,4\,8)\,(3\,5\,10)\,(7\,12\,16)\,(9\,14\,20)\,(11\,15\,19)\,(13\,18\,23)\,(17\,22\,21)}
{(3371174124589 + 229057872665 I) z^5 + (-11201152191024270 - 3325979797628043 I) z^4 + (-963338002220292 + 13989320370349287 I) z^3 + (17371569747049000 + 8103932165040713 I) z^2 + (-31746287436015590 - 4985400396450271 I) z + (-1696354221179599 + 4143980944541759 I)}
%
% \exones{4}{57}
%{((0\,7\,19\,14\,9\,4)\,(1\,11\,21\,16\,8\,2)\,(3\,15\,23\,20\,12\,6)\,(5\,17\,22\,18\,13\,10)}
%{((0\,1)\,(2\,3)\,(4\,5)\,(6\,7)\,(8\,9)\,(10\,11)\,(12\,13)\,(14\,15)\,(16\,17)\,(18\,19)\,(20\,21)\,(22\,23)}
%{((0\,2\,6)\,(1\,4\,10)\,(3\,8\,14)\,(5\,9\,16)\,(7\,12\,18)\,(11\,13\,20)\,(15\,19\,22)\,(17\,21\,23)}
% { z^2 + -2835810000 z + 6549518250000}
%
\exones{4}{58}
{(0\,5\,13\,23\,20\,14\,8\,4\,3)\,(1\,7\,17\,22\,19\,15\,10\,6\,2)\,(9\,18\,12)\,(11\,21\,16)}
{(0\,1)\,(2\,3)\,(4\,5)\,(6\,7)\,(8\,9)\,(10\,11)\,(12\,13)\,(14\,15)\,(16\,17)\,(18\,19)\,(20\,21)\,(22\,23)}
{(0\,2\,4)\,(1\,3\,6)\,(5\,8\,12)\,(7\,10\,16)\,(9\,14\,19)\,(11\,15\,20)\,(13\,18\,22)\,(17\,21\,23)}
{531441 z^3 + -42488333973 z^2 + 366016695860736 z + -13234833427661329552}
\exones{4}{60}
{(0\,7\,15\,22\,16\,13\,20\,14\,10\,4)\,(1\,9\,17\,18\,12\,8\,5\,2)\,(3\,11\,6)\,(19\,23\,21)}
{(0\,1)\,(2\,3)\,(4\,5)\,(6\,7)\,(8\,9)\,(10\,11)\,(12\,13)\,(14\,15)\,(16\,17)\,(18\,19)\,(20\,21)\,(22\,23)}
{(0\,2\,6)\,(1\,4\,8)\,(3\,5\,10)\,(7\,11\,14)\,(9\,12\,16)\,(13\,18\,21)\,(15\,20\,23)\,(17\,22\,19)}
{3796875 z + -192596360288}
\exones{4}{61}
{(0\,7\,21\,23\,18\,10\,4)\,(1\,8\,2)\,(3\,15\,22\,16\,11\,12\,6)\,(5\,17\,20\,13\,19\,14\,9)}
{(0\,1)\,(2\,3)\,(4\,5)\,(6\,7)\,(8\,9)\,(10\,11)\,(12\,13)\,(14\,15)\,(16\,17)\,(18\,19)\,(20\,21)\,(22\,23)}
{(0\,2\,6)\,(1\,4\,9)\,(3\,8\,14)\,(5\,10\,16)\,(7\,12\,20)\,(11\,18\,13)\,(15\,19\,23)\,(17\,22\,21)}
{z}
%
% \exones{4}{62}
%{((0\,7\,10\,4)\,(1\,9\,19\,20\,14\,8\,5\,2)\,(3\,11\,13\,23\,21\,16\,12\,6)\,(15\,22\,17\,18)}
%{((0\,1)\,(2\,3)\,(4\,5)\,(6\,7)\,(8\,9)\,(10\,11)\,(12\,13)\,(14\,15)\,(16\,17)\,(18\,19)\,(20\,21)\,(22\,23)}
%{((0\,2\,6)\,(1\,4\,8)\,(3\,5\,10)\,(7\,12\,11)\,(9\,14\,18)\,(13\,16\,22)\,(15\,20\,23)\,(17\,21\,19)}
% { z + -287496}
%
% \exones{4}{63}
%{((0\,7\,21\,19\,22\,14\,9\,4)\,(1\,11\,20\,13\,23\,16\,8\,2)\,(3\,15\,12\,6)\,(5\,17\,18\,10)}
%{((0\,1)\,(2\,3)\,(4\,5)\,(6\,7)\,(8\,9)\,(10\,11)\,(12\,13)\,(14\,15)\,(16\,17)\,(18\,19)\,(20\,21)\,(22\,23)}
%{((0\,2\,6)\,(1\,4\,10)\,(3\,8\,14)\,(5\,9\,16)\,(7\,12\,20)\,(11\,18\,21)\,(13\,15\,22)\,(17\,23\,19)}
% { z + -1728}
%
% \exones{4}{64}
%{((0\,7\,21\,19\,12\,4)\,(1\,11\,23\,18\,8\,2)\,(3\,17\,13\,22\,14\,6)\,(5\,16\,9\,20\,15\,10)}
%{((0\,1)\,(2\,3)\,(4\,5)\,(6\,7)\,(8\,9)\,(10\,11)\,(12\,13)\,(14\,15)\,(16\,17)\,(18\,19)\,(20\,21)\,(22\,23)}
%{((0\,2\,6)\,(1\,4\,10)\,(3\,8\,16)\,(5\,12\,17)\,(7\,14\,20)\,(9\,18\,21)\,(11\,15\,22)\,(13\,19\,23)}
% { z}

\end{spacing}

\bibliographystyle{siam}
\bibliography{Biblio}
\end{document}